\newcommand{\sss}{\scriptscriptstyle}
\newcommand{\e}{\ensuremath{\mathrm{e}}}
\long\def\/*#1*/{}
\newcommand\given[1][]{\:#1\middle|\:} 
\newcommand\prob[1]{\mathbbm{P}\left(#1\right)}  
\newcommand\expt[1]{\mathbbm{E}\left(#1\right)}  
\newcommand{\dto}{\ensuremath{\xrightarrow{d}}}  
\newcommand{\dif}{\ensuremath{d}} 
\newcommand\ind[1]{\ensuremath{\mathbbm{1}_{\left\{#1\right\}}}} 
\newcommand{\R}{\mathbbm{R}}                 
\newcommand{\Z}{\mathbbm{Z}}				
\newcommand{\1}{\ensuremath{\mathbbm{1}}}
\newcommand{\floor}[1]{\ensuremath{\left\lfloor #1 \right\rfloor}}
\newcommand{\bld}[1]{\ensuremath{\boldsymbol{#1}}}
\newcommand{\norm}[1]{\ensuremath{\left\|#1\right\|}}
\newcommand{\var}[1]{\ensuremath{\mathrm{Var}\left(#1\right)}}
\newtheorem{theorem}{Theorem}
\newtheorem{lemma}[theorem]{Lemma}
\newtheorem{proposition}[theorem]{Proposition}
\newtheorem{remark}{Remark}
\newtheorem{algo}{Algorithm}
\begin{document}

\title{Generalized random sequential adsorption on Erd\H{o}s-R\'enyi random graphs}

\author{Souvik Dhara}
\author{Johan S.H.~van Leeuwaarden}
\author{Debankur Mukherjee}
\affil{Department of Mathematics and Computer Science,\\
Eindhoven University of Technology, The Netherlands}

\renewcommand\Authands{ and }

\date{\today}
\maketitle

\begin{abstract}
We investigate Random Sequential Adsorption (RSA) on a random graph via the following greedy algorithm:
Order the $n$ vertices at random, and sequentially declare each vertex either {\it active} or {\it frozen}, depending on some local rule in terms of the state of the neighboring vertices.
The classical RSA rule declares a vertex active if none of its neighbors is, in which case the set of active nodes forms an independent set of the graph.  We generalize this nearest-neighbor blocking rule in three ways and apply it to the Erd\H{o}s-R\'enyi random graph. We consider these generalizations in the large-graph limit $n\to\infty$ and 
characterize the {\it jamming constant}, the limiting proportion of active vertices in the maximal greedy set.

\vspace{.2cm} \noindent {\bf Keywords:} random sequential adsorption; jamming limit; random graphs; parking problem; greedy independent set; frequency assignment
\end{abstract}

\section{Introduction}\label{sec:intro}

Random Sequential Adsorption (RSA) refers to a process in which particles appear sequentially at random positions in some space, and if accepted, remain at those positions forever. This strong form of irreversibility is often observed in dynamical interacting particle systems; see \cite{C14, RM06, P01, CAP07, LTDLV15} and the references therein for many applications across various fields of science. One example concerns particle systems with hard-core interaction, in which particles are accepted only when there is no particle already present in its direct neighborhood. In a continuum, the hard-core constraint says that particles should be separated by at least some fixed distance.

Certain versions of RSA are called \emph{parking problems} \cite{R58}, where cars of a certain length arrive at random positions on some interval (or on $\R$). Each car sticks to its location if it does not overlap with the other cars already present.
The fraction of space occupied when there is no more place for further cars is known as R\'enyi's parking constant.
RSA or parking problems were also studied on random trees \cite{DFK08, S09}, where the nodes of an infinite random tree are selected one by one, and are declared {\it active} if none of the neighboring nodes is active already, and become {\it frozen} otherwise.

We will study RSA on random graphs, where as in a tree, nodes either become active or frozen. We are interested in the
fraction of active nodes in the large-network limit when the number of nodes $n$ tends to infinity. We call this limiting fraction the {\it jamming constant}, and it can be interpreted as the counterpart of  R\'enyi's parking constant, but then for random graphs.
For classical RSA with nearest-neighbor blocking, the jamming constant corresponds to the normalized size of a greedy maximal independent set, where at each step one vertex is selected uniformly at random from the set of all vertices that have not been selected yet, and is included in the independent set if none of its neighbors are already included.
The size of the maximal greedy independent set of an Erd\H{o}s-R\'enyi random graph was first considered in \cite{M84}; see Remark~\ref{rem:K=1} below.
Recently, jamming constants for the Erd\H{o}s-R\'enyi random graph were studied in \cite{BJS15, SJK15}, and for random graphs with given degrees in \cite{BJM13, BJLM14, BJL15}. 
In \cite{BJLM14}, random graphs were used to model wireless networks, in which nodes (mobile devices) try to activate after random times, and can only become active if none of their neighbors is active (transmitting).
When the size of the wireless network becomes large and nodes try to activate after a short random time independently of each other, the jammed state with a maximal number of active nodes becomes the dominant state of the system. In \cite{SJK15}, random graphs with nearest-neighbor blocking were used to model a Rydberg gas with repelling atoms. In ultra-cold conditions, the repelling atoms with quantum interaction favor a jammed state, or frozen disorder, and in \cite{SJK15} it was shown that this jammed state could be captured in terms of the jamming limit of a random graph, with specific choices for the free parameters of the random graph to fit the experimental setting.

In this paper we consider three generalizations of RSA on the Erd\H{o}s-R\'enyi random graph.
The generalizations cover a wide variety of models, where the interaction between the particles is repellent, but not as stringent as nearest-neighbor blocking.
The first generalization is inspired by wireless networks.
Suppose that each active node causes one unit of noise to all its neighboring nodes. Further, a node is allowed to transmit (and hence to become active) unless it senses \emph{too much} noise, or causes too much noise to some already active node.
We assume that there is a \emph{threshold} value $K$ such that a node is allowed to become active only when the total noise experienced at that node is less than $K$, and the total noise that would be caused by the activation of this node to its neighboring active nodes, remains below $K$. We call this the \emph{Threshold model}. 
In the jammed state, all active  nodes  have fewer than $K$ active neighbors, and all frozen nodes would violate this condition when becoming active. 
This condition relaxes the strict hardcore constraint ($K=1$) and combined with RSA produces a greedy maximal $K$-independent set, defined as a subset of vertices $U$ in which each vertex has at most $K-1$ neighbors in $U$. 
The Threshold model was studied in \cite{M14, MBBD15} on two-dimensional grids in the context of distributed message spreading in wireless networks.

The second generalization considers a multi-frequency or multi-color version of classical RSA. There are $K$ different frequencies available.
A node can only receive a `higher' frequency than any of its already active neighbors.
Otherwise the node gets frozen. As in the Threshold model, the case $K=1$ reduces to the classical hard-core constraint. 
But for $K\geq 2$, this multi-frequency version gives different jammed states, and is also known as RSA with screening or the \emph{Tetris model} \cite{FK10}. 
In the Tetris model, particles sequentially drop from the sky, on random locations (nodes in case of graphs), and stick to a height that is one unit higher than the heights of the particles that occupy neighboring locations.
This model has been studied in the context of ballistic particle deposition \cite{M83}, where particles dropping vertically onto  a surface stick to a location when they hit either a previously deposited particle or the surface.

The third generalization concerns random \emph{Sequential Frequency Assignment Process} (SFAP) \cite{FD07,FK09}. 
As in the Tetris model, there are $K$ different frequencies, and a node cannot use a frequency at which one of its neighbors is already transmitting.
But this time, a new node selects the lowest available frequency. If there is no further frequency available  (i.e.~all the $K$ different frequencies are taken by its neighbors), the node becomes frozen.  The SFAP model can be used as a simple and easy-to-implement algorithm for determining  interference-free frequency assignment in  radio communications  regulatory  services \cite{FD07}.

The paper is structured as follows. Section~\ref{sec:results} describes the models in detail and presents the main results. We quantify how the jamming constant depends on the value of $K$ and the edge density of the graph. Section~\ref{sec:proofs} gives the proofs of all the results and Section~\ref{sec:fu} describes some further research directions.

\subsection*{Notation and terminology}  We denote an Erd\H{o}s-R\'enyi random graph on the vertex set $[n]=\{1,2,\dots,n\}$  by $G(n,p_n)$, where for any $u\neq v$, $(u,v)$ is an edge of $G(n,p_n)$ with probability $p_n=c/n$ for some $c>0$, independently for all distinct $(u,v)$-pairs.
We often mean by $G(n,p_n)$ the distribution of all possible configurations of the Erd\H{o}s-R\'enyi random graph with parameters $n$ and $p_n$, and we sometimes omit sub-/superscript $n$ when it is clear from the context. 
The symbol $\1_A$ denotes the indicator random variable corresponding to the set $A$. An empty sum and an empty product is always taken to be zero and one respectively.
We use calligraphic letters such as $\mathcal{A},$ $\mathcal{I}$, to denote sets, and the corresponding normal fonts such as $A$, $I$, to denote their cardinality.
Also, for discrete functions $f:\{0,1,\dots\}\mapsto \R$ and $x>0$, $f(x)$ should be understood as $f(\floor{x})$. The boldfaced notations such as $\bld{x}$, $\bld{\delta}$ are reserved to denote vectors, and $\norm{\cdot}$ denotes the sup-norm on the Euclidean space. The convergence in distribution statements for processes are to be understood as uniform convergence over compact sets.

\section{Main results}\label{sec:results}
We now present  the three models in three separate sections. For each model, we describe an algorithm that lets the graph grow and simultaneously applies RSA. Asymptotic analysis of the algorithms in the large-graph limit $n\to\infty$ then leads to characterizations of the jamming constants. 

\subsection{Threshold model}
For any graph $G$ with vertex set $V$, let $d_{\max}(G)$ denote the maximum degree, and denote the subgraph induced by $U\subset V$ as $G_{\sss U}$. Define the configuration space as
\begin{equation}
\Omega_K(G)=\{U\subset V:d_{\max}(G_{\sss U})<K\}.
\end{equation}
We call any member of  $\Omega_K(G)$ a \emph{$K$-independent set} of $G$.
Now consider the following process on $G(n,p_n)$: Let $\mathcal{I}(t)$ denote the set of active nodes at time $t$, and $\mathcal{I}(0):=\varnothing$. Given $\mathcal{I}(t)$, at step $t+1$, one vertex $v$ is selected uniformly at random from the set of all vertices which have not been selected already, and if $d_{\max}(G_{\sss \mathcal{I}(t)\cup \{v\}})<K$, then set $\mathcal{I}(t+1)=\mathcal{I}(t)\cup\{v\}$. Otherwise, set $\mathcal{I}(t+1)=\mathcal{I}(t).$
Note that, given the graph $G(n,p_n)$, $\mathcal{I}(t)$ is a random element from $\Omega_K(G(n,p_n))$ for each $t$, and after $n$ steps we get a \emph{maximal} greedy $K$-independent set. 
We are interested in the jamming fraction $I(n)/n$ as $n$ grows large, and we call the limiting value, if it exists, the jamming constant.

To analyze the jamming constant for the Threshold model, we introduce an exploration algorithm that generates both the random graph and the greedy $K$-independent set simultaneously. The algorithm thus outputs a maximal $K$-independent set equal in distribution to $\mathcal{I}(n)$.

\begin{algo}[{Threshold exploration}]\label{algo-th}
\normalfont At time $t$, we keep track of the sets $\mathcal{A}_k(t)$ of active vertices that have precisely $k$ active neighbors, for $0\leq k\leq K-1$, the set $\mathcal{B}(t)$ of frozen vertices, and the set $\mathcal{U}(t)$ of unexplored vertices.
Initialize by setting $\mathcal{A}_k(0)=\mathcal{B}(0)=\varnothing$ for $0\leq k\leq K-1$, and $\mathcal{U}(0)=V$. Define $\mathcal{A}(t):=\bigcup_{k}\mathcal{A}_k(t)$.
At time $t+1$, if $\mathcal{U}(t)$ is nonempty, we select a vertex $v$ from $\mathcal{U}(t)$ uniformly at random and try to pair it with the vertices of $\mathcal{A}(t)\cup\mathcal{B}(t)$, mutually independently, with probability $p_n$.
Suppose the set of all vertices in $\mathcal{A}(t)$ to which the vertex $v$ is paired is given by $\{v_1,\dots,v_r\}$ for some $r\geq 0$, where for all $i\leq r$, $v_i\in \mathcal{A}_{k_i}(t)$ for some $0\leq k_i\leq K-1$. Then:
\begin{itemize}
\item If $r<K$ and $v_i\notin \mathcal{A}_{K-1}(t)$ for all $i\leq r$ (i.e.~$\max_i k_i<K-1$), then put $v$ in $\mathcal{A}_r(t)$ and move each $v_i$ from $\mathcal{A}_{k_i}(t)$ to $\mathcal{A}_{k_i+1}(t)$.
More precisely, set
\begin{equation*}
\begin{aligned}[t]
& \mathcal{A}_r(t+1)=\mathcal{A}_r(t)\cup \{v\},\\
& \mathcal{A}_{k_i+1}(t+1)= \mathcal{A}_{k_i}(t)\cup \{v_i\},\\
& \mathcal{U}(t+1)=\mathcal{U}(t)\setminus \{v\}.
  \end{aligned}
  \hspace*{1cm}
  \begin{aligned}[t]
&  \mathcal{A}_{k_i}(t+1)=\mathcal{A}_{k_i}(t)\setminus \{v_1,\dots,v_r\},\\
&  \mathcal{B}(t+1)=\mathcal{B}(t),\\
  \end{aligned}
\end{equation*}

\item Otherwise, if $r\geq K$ or $\mathcal{A}_{K-1}(t)\cap \{v_1,\dots,v_r\} \neq \varnothing$, declare $v$ to be blocked, i.e.~$\mathcal{B}(t+1)= \mathcal{B}(t)\cup \{v\}$, $\mathcal{A}_{k}(t+1)= \mathcal{A}_{k}(t)$ for all $0\leq k\leq K-1$ and $\mathcal{U}(t+1)=\mathcal{U}(t)\setminus \{v\}$.
\end{itemize}
\end{algo}
The algorithm terminates at $t=n$ and produces as output the set $\mathcal{A}(n)$ and a graph $\mathcal{G}(n)$. The following result guarantees that we can use Algorithm~\ref{algo-th} for analyzing the Threshold model:
\begin{proposition}\label{prop:thres}
The joint distribution of $(\mathcal{G}(n),\mathcal{A}(n))$ is identical to the joint distribution of $(G(n,p_n),\mathcal{I}(n))$.
\end{proposition}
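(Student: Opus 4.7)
The plan is to prove Proposition \ref{prop:thres} via a standard deferred-decisions coupling argument, treating Algorithm \ref{algo-th} as a lazy implementation of the original RSA dynamics on $G(n,p_n)$. Concretely, I would construct a coupling in which $G \sim G(n,p_n)$ is sampled once and for all, a uniformly random permutation $\sigma=(v_1,\dots,v_n)$ of $[n]$ is drawn independently, and the RSA process of Section \ref{sec:results} is run using $\sigma$ as the exploration order, producing $\mathcal{I}(n)$. On the other side, I run Algorithm \ref{algo-th} using the same $\sigma$ to pick the vertex at each step, but with the edges from $v_{t+1}$ to $\mathcal{A}(t)\cup\mathcal{B}(t)$ sampled as the edges of $G$ incident to $v_{t+1}$ among already-explored vertices. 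Edges internal to $\mathcal{U}(t)$ and from $v_{t+1}$ to $\mathcal{U}(t)\setminus\{v_{t+1}\}$ are simply left unrevealed.

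The first step is to verify the \emph{distributional invariant} by induction on $t$: conditional on the full history $\mathcal{F}_t$ of the algorithm up to step $t$ (the sets $\mathcal{A}_k(t), \mathcal{B}(t), \mathcal{U}(t)$ together with the status of every edge queried so far), the edges not yet queried (i.e.\ all edges with at least one endpoint in $\mathcal{U}(t)$, except those already revealed to $v_1,\dots,v_t$) are mutually independent and distributed as $\mathrm{Bernoulli}(p_n)$. This holds at $t=0$ by definition of $G(n,p_n)$, and is preserved at step $t+1$ because the only edges queried at that step are those from the freshly selected vertex $v_{t+1}$ to the currently explored set $\mathcal{A}(t)\cup\mathcal{B}(t)$, which by the inductive hypothesis are independent $\mathrm{Bernoulli}(p_n)$ and independent of the remaining unqueried edges.

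The second step is to match the acceptance rule. In the original dynamics, $v_{t+1}$ joins $\mathcal{I}(t+1)$ iff $d_{\max}(G_{\mathcal{I}(t)\cup\{v_{t+1}\}})<K$. Expanding this: $v_{t+1}$ must have fewer than $K$ neighbors in $\mathcal{I}(t)$ (so its own degree in the induced subgraph is $<K$), and no neighbor of $v_{t+1}$ in $\mathcal{I}(t)$ may already have $K-1$ active neighbors (so adding $v_{t+1}$ does not push any existing active vertex to degree $K$). Under the coupling, the neighbors of $v_{t+1}$ in $\mathcal{A}(t)$ as revealed by the algorithm are exactly the neighbors of $v_{t+1}$ in $\mathcal{I}(t)$ in $G$, and $\mathcal{A}_{K-1}(t)$ corresponds precisely to currently active vertices with $K-1$ active neighbors. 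Hence the algorithm's dichotomy $r<K$ and $\mathcal{A}_{K-1}(t)\cap\{v_1,\dots,v_r\}=\varnothing$ versus its complement is identical to the RSA acceptance condition, and the bookkeeping of moving $v_i$ from $\mathcal{A}_{k_i}(t)$ to $\mathcal{A}_{k_i+1}(t)$ correctly tracks active-degree increments.

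The third step is to run the coupling to completion. After $n$ steps, all vertices have been explored, so every edge of $G$ with both endpoints explored has been queried; edges among $\mathcal{U}(n)=\varnothing$ are vacuously absent. By the invariant, the revealed graph $\mathcal{G}(n)$ has the same distribution as $G(n,p_n)$ restricted to all pairs, i.e.\ as $G(n,p_n)$ itself, and by the rule-matching, $\mathcal{A}(n)=\mathcal{I}(n)$ pointwise on the coupling. This yields equality in joint distribution. I do not expect a serious obstacle; the only subtle point is to be careful that edges within $\mathcal{B}(t)$ are never needed for any future decision (frozen vertices play no role in subsequent acceptance tests), which justifies not querying them, and to write the inductive invariant precisely enough that the measurability/independence argument is unambiguous.
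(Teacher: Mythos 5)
Your proposal is correct and is essentially the paper's own argument: the paper also couples the two constructions by drawing a common random permutation for the selection order and common edge indicators (uniform variables $U_{i,j}$ thresholded at $p_n$), and then observes that the edges Algorithm~\ref{algo-th} reveals from the new vertex to $\mathcal{A}(t)$ are exactly what is needed to reproduce the acceptance decision $d_{\max}(G_{\sss \mathcal{I}(t)\cup\{v\}})<K$, so the active sets agree pointwise and the assembled graph is distributed as $G(n,p_n)$. Your deferred-decisions phrasing and the explicit unpacking of the acceptance rule are just a more detailed rendering of the same coupling.
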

Observe that $|\mathcal{U}(t)|=n-t$.
Our goal is to find the jamming constant, i.e. the asymptotic value of $A(n)/n$. For that purpose, define $\alpha^n_k(t):=A_k(\floor{nt})/n$, and the vector $\bld{\alpha}^n(t)=(\alpha^n_0(t),\ldots,\alpha^n_{K-1}(t))$ for $t\in [0,1]$.
 We can now state the main result for the Threshold model.
\begin{theorem}[{Threshold jamming limit}]\label{th:threshold}
The process $\{\bld{\alpha}^n(t)\}_{0\leq t\leq1}$ on  $G(n,p_n)$, with $p_n=c/n$, converges in distribution  to the deterministic process $\{\bld{\alpha}(t)\}_{0\leq t\leq 1}$ that can be described as the unique solution of the  integral recursion equation
\begin{equation}\label{eq:alpha-threshold}
 \alpha_k(t)=\int_0^t\delta_k(\bld{\alpha}(s))\dif s,
\end{equation} where
\begin{equation}\label{def:delta-th}
 \delta_k(\bld{\alpha})=
 \begin{cases}
  -c\alpha_{0}\e^{-c\alpha_{\sss \leq K-1}}\sum_{r=0}^{K-2}c^r\alpha_{\sss \leq K-2}^r/r!+\e^{-c\alpha_{\sss \leq K-1}},  &k=0,\\
  c(\alpha_{k-1}-\alpha_k)\e^{-c\alpha_{\sss \leq K-1}}\sum_{r=0}^{K-2}c^r\alpha_{\sss \leq K-2}^r/r! & 1\leq k\leq K-2,\\
 \hspace*{2cm} +\e^{-c\alpha_{\sss \leq K-1}}c^k\alpha_{\sss \leq K-2}^k/k!,&\\
 c\alpha_{K-2}\e^{-c\alpha_{\sss \leq  K-1}}\sum_{r=0}^{K-2}c^r\alpha_{\sss \leq  K-2}^r/r! & k=K-1,\\
 \hspace*{2cm} +\e^{-c\alpha_{\sss \leq K-1}}c^{K-1}\alpha_{\sss \leq K-2}^{K-1}/(K-1)!,&
 \end{cases}
\end{equation} with $\alpha_{\sss \leq k}=\alpha_0+\dots+\alpha_k$.
Consequently, as $n\to\infty$, the jamming fraction converges in distribution to a constant, i.e.,
\begin{equation}
\frac{I(n)}{n}\dto \sum_{k=0}^{K-1}\alpha_k(1).
\end{equation}
\end{theorem}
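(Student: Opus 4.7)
The plan is to apply the differential equation method of Wormald to the Markov chain $\bld{A}^n(t) = (A_0(t), \ldots, A_{K-1}(t))$ generated by Algorithm~\ref{algo-th}; by Proposition~\ref{prop:thres} this is equivalent to analyzing the original model. The central probabilistic input is a Poisson approximation: given the state at step $t$, the numbers $N_k$ of neighbors that the newly drawn vertex $v$ has in the sets $\mathcal{A}_k(t)$ are independent $\mathrm{Bin}(A_k(t), c/n)$ variables, and by Le Cam's inequality they are jointly within total-variation distance $O(1/n)$ of independent Poissons with means $c\alpha_k^n(t/n)$, uniformly over admissible states.

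First, I would decompose the one-step change $A_k(t+1) - A_k(t)$ into (i) $+1$ if $v$ joins $\mathcal{A}_k$, (ii) the count of neighbors promoted from $\mathcal{A}_{k-1}$ to $\mathcal{A}_k$, and (iii) minus the count of neighbors promoted from $\mathcal{A}_k$ to $\mathcal{A}_{k+1}$ (the last term absent for $k = K-1$). Acceptance of $v$ corresponds to the event $\{N_{K-1} = 0\} \cap \{S \le K-1\}$ with $S := \sum_{j \le K-2} N_j$, and on this event $v$ joins $\mathcal{A}_S$. Under the Poisson approximation, $\PR(v \text{ joins } \mathcal{A}_k) = \e^{-c\alpha_{\sss \leq K-1}}(c\alpha_{\sss \leq K-2})^k/k!$, while the identity $\E[N_k \1_{S=r}] = c\alpha_k \, \e^{-c\alpha_{\sss \leq K-2}}(c\alpha_{\sss \leq K-2})^{r-1}/(r-1)!$ for $r \ge 1$ (a consequence of the multinomial decomposition of $(N_0, \ldots, N_{K-2})$ given $S$), together with summation over $r \le K-1$, yields $\E[N_k \1_{\text{accepted}}] = c\alpha_k \e^{-c\alpha_{\sss \leq K-1}} \sum_{r=0}^{K-2}(c\alpha_{\sss \leq K-2})^r/r!$. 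Combining the three contributions gives $\E[A_k(t+1) - A_k(t) \mid \mathcal{F}_t] = \delta_k(\bld{\alpha}^n(t/n)) + O(n^{-1})$, with $\delta_k$ exactly as in \eqref{def:delta-th}.

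With the drift identified, the hypotheses of Wormald's theorem are easy to verify. Each step changes $A_k$ by at most $K$ in absolute value (at most $K-1$ active neighbors can be promoted plus the newly added vertex), so the increments are uniformly bounded; the $\delta_k$ are polynomials in $\bld{\alpha}$ multiplied by $\e^{-c\alpha_{\sss \leq K-1}}$ and are therefore Lipschitz on the compact set $\{\bld{\alpha} \ge \bld{0}, \sum_k \alpha_k \le 1\}$ in which the state lives; and the trend error is $O(n^{-1})$. Wormald's theorem then yields $\sup_{t \in [0,1]} \norm{\bld{\alpha}^n(t) - \bld{\alpha}(t)} \pto 0$, where $\bld{\alpha}(\cdot)$ is the unique solution of $\alpha_k'(t) = \delta_k(\bld{\alpha}(t))$ with $\bld{\alpha}(0) = \bld{0}$, equivalently the integral equation \eqref{eq:alpha-threshold}. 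The final claim $I(n)/n \dto \sum_k \alpha_k(1)$ follows by the continuous mapping theorem applied to evaluation at $t = 1$.

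The main obstacle is the bookkeeping in the second step: the promotion counts must be computed on the acceptance event, which couples each $N_k$ with the total $S$, and the expressions must be simplified to recognise the explicit form of $\delta_k$ appearing in \eqref{def:delta-th}. Everything reduces cleanly once one exploits the multinomial--Poisson identity above and separates the Poisson factor $\e^{-c\alpha_{K-1}}$ coming from $\{N_{K-1}=0\}$ from the partial-sum factor coming from $\{S\le K-1\}$. The remaining ingredients --- Poisson approximation, uniform boundedness of increments, and Lipschitzness of the drift --- are standard and route the argument through off-the-shelf fluid-limit theorems.
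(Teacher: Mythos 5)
Your proposal is correct and follows the same overall architecture as the paper's proof: reduce to Algorithm~\ref{algo-th} via Proposition~\ref{prop:thres}, decompose the increment of $A_k$ into the acceptance indicator plus the inflow of promoted vertices from $\mathcal{A}_{k-1}$ minus the outflow to $\mathcal{A}_{k+1}$, identify the limiting drift $\delta_k$, and close with a fluid-limit concentration argument. You differ in two technical choices, both legitimate. First, you compute the drift by replacing the independent binomials $(N_0,\dots,N_{K-1})$ with independent Poissons via Le Cam and then using the multinomial decomposition of $(N_0,\dots,N_{K-2})$ given $S$; the paper instead works with the binomials exactly through Lemma~\ref{lem:bin-condn-expt}, whose identity $\E(X_i\mid X_1+\dots+X_r\le R)=n_ip\,\PR(Z_1\le R-1)/\PR(Z_2\le R)$ produces a clean prelimit drift $\bld{\delta}^n$ (so that uniform convergence $\bld{\delta}^n\to\bld{\delta}$ is the only limit taken) and whose second-moment part is reused to bound the quadratic variation of the martingale. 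Your total-variation route is valid here only because the functionals you take expectations of, namely $N_k$ restricted to the acceptance event, are bounded by $K$; that observation deserves to be made explicit, since TV control does not bound expectations of unbounded functionals. Second, you invoke Wormald's theorem where the paper performs the Doob--Meyer decomposition by hand, kills the martingale via its quadratic variation, and applies Gr\"onwall's inequality; these are the same argument, with Wormald packaging the boundedness, Lipschitz and trend hypotheses that you verify. Either way one obtains $\sup_{t\in[0,1]}\norm{\bld{\alpha}^n(t)-\bld{\alpha}(t)}\pto 0$, and the final claim for $I(n)/n$ follows by evaluation at $t=1$.
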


Figure~\ref{fig-th} displays some numerical values for the fraction of active nodes given by $ \sum_{k=0}^{K-1}\alpha_k(1)$, as a function of 
the average degree $c$ and the threshold $K$. As expected, an increased threshold $K$ results in a larger fraction.
Figure~\ref{fig-th} also shows prelimit values of this fraction for a finite network of $n=1000$ nodes. These values are obtained by simulation, where for each value of $c$ we show the result of one run only.
 This leads to the rougher curves that  closely follow the smooth deterministic curves of the jamming constants. If we had plotted the average values of multiple simulation runs, 100 say, this average simulated curve would be virtually indistinguishable from the smooth curve. This not only confirms that our limiting result is correct, but it also indicates that the limiting results serve as good approximations for finite-sized networks.  We have drawn similar conclusions based on extensive simulations for all the jamming constants presented in this section. 

\begin{figure}
\begin{center}
\includegraphics[scale=.45]{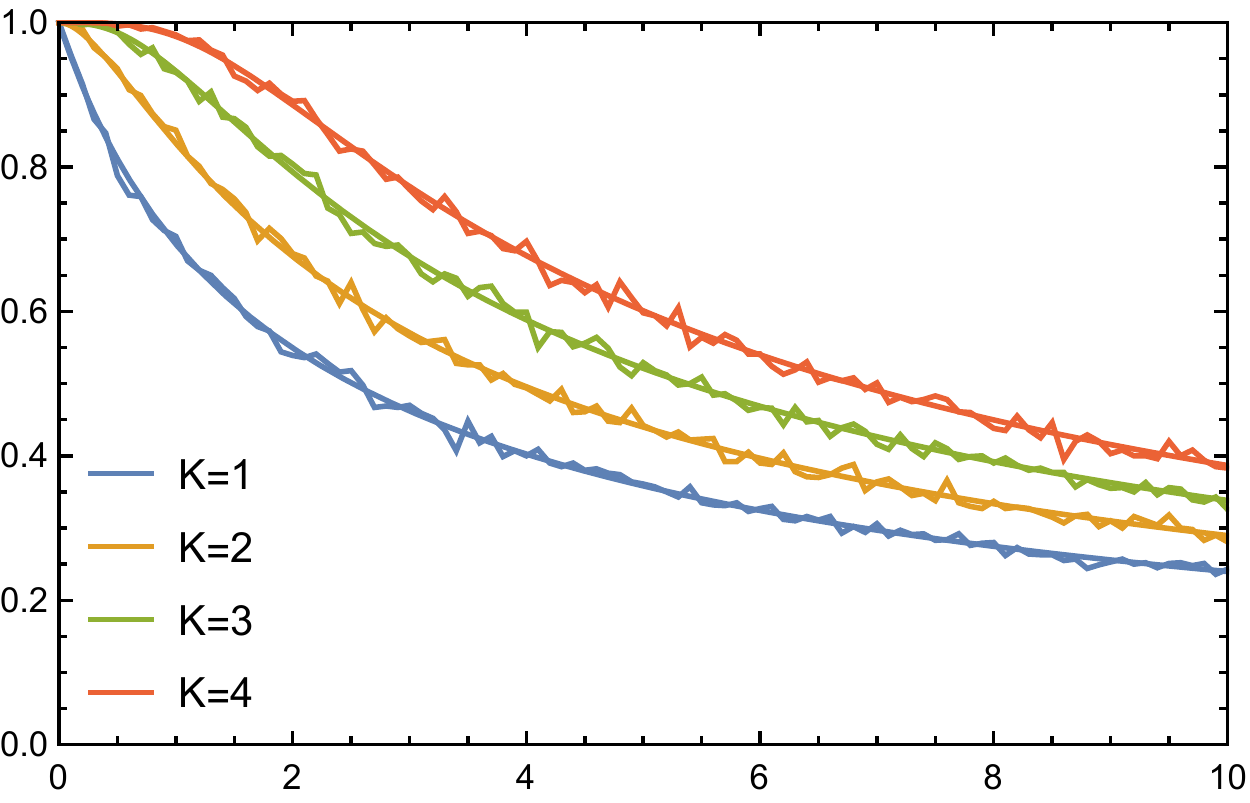}
\end{center}
\caption{Fraction of active nodes as a function of $c$, for $0\leq c\leq 10$ and several $K$-values. The smooth lines display $\sum_{k=0}^{K-1}\alpha_k(1)$ and the rough lines follow from simulation of a network with $n=1000$ nodes.}
\label{fig-th}
\end{figure}

\begin{remark}\label{rem:K=1}
\normalfont
It can be checked that Theorem~\ref{th:threshold} gives the known jamming constant for $K=1$. In this case, \eqref{eq:alpha-threshold} reduces to 
 \begin{equation}
 \alpha_0(t)=\int_0^t\e^{-c\alpha_0(s)}\dif s
 \end{equation}
 with $\alpha_0(0)=0$. Thus the value of the jamming constant becomes $\alpha_0(1)=c^{-1}\log(1+c)$, which agrees with the known value \cite[Theorem 2.2 (ii)]{M84}.
\end{remark}

\begin{remark}
\normalfont
Theorem \ref{th:threshold} can be understood intuitively as follows.
Observe that when a vertex $v$ is selected from $\mathcal{U}$, it will only be added to $\mathcal{A}_k$ if it is not connected to $\mathcal{A}_{K-1}$, and it has precisely $k\leq K-1$ connections to the rest of $\mathcal{A}$. Further, if the selected vertex $v$ becomes active, then all the vertices in $\mathcal{A}_j$, to which $v$ gets connected, are moved to $\mathcal{A}_{j+1}$, $0\leq j<K-1$. 
The number of connections to $\mathcal{A}_{k}$, in this case, is Bin$(A_{k},p_n)$ and that to $\bigcup_{i=0}^{K-2}\mathcal{A}_i$ is Bin$(A_{\sss\leq K-2},p_n)$, and we have the additional restriction that the latter is less than or equal to $K-1$. The expectation of Bin$(A_{k},p_n)$ restricted to  Bin$(A_{\sss\leq K-2},p_n)\leq K-1$ is given by $A_kp_n$ times another binomial probability (see Lemma~\ref{lem:bin-condn-expt}). This explains the first terms on the right side of \eqref{def:delta-th}.
Finally, taking into account the probability of acceptance to $\mathcal{A}_k$ gives rise to the second terms on the right side of \eqref{def:delta-th}. 
\end{remark}
\begin{remark}\normalfont Algorithm~\ref{algo-th} is different in spirit than the  exploration algorithms in the recent works \cite{SJK15,BJL15}.
The standard greedy algorithms in \cite{SJK15,BJL15} work as follows: Given a graph $G$ with $n$ vertices, include the vertices in an independent set $I$ consecutively, and at each step, one vertex is chosen randomly from those not already in
the set, nor adjacent to a vertex in the set. This algorithm must find all vertices adjacent to
$I$ as each new vertex is added to $I$, which requires probing
all the edges adjacent to vertices in~$I$.
However, since in the Threshold model with $K\geq 2$ an active node does not obstruct its neighbors from activation per se, we need to keep track of the nodes that are neither active nor blocked. We deal with this additional complexity by simply observing that the activation of a node is determined by exploring the connections with the previously active vertices only. Therefore, Algorithm~\ref{algo-th} only describes the connections between the new (and potentially active) vertex and the already active vertices (and the frozen vertices in order to complete the construction of the graph). Since the graph is built one vertex at a time, the jamming state is achieved precisely at time $t=n$, and not at some random time in between~$1$ and~$n$ as in  \cite{SJK15,BJL15}.
\end{remark}
\begin{remark}\normalfont
For the other two RSA generalizations discussed below we will use a similar algorithmic approach, building and exploring the graph  one vertex at a time. These algorithms form a crucial ingredient of this paper because they make the RSA processes amenable to analysis.
\end{remark}
\subsection{Tetris model}
In the Tetris model, particles are sequentially deposited on the vertices of a graph. For a vertex $v$, the incoming particle sticks at some height $h_v\in [K]=\{1,2,\ldots,K\}$ determined  by the following rules:
At time $t=0$, initialize by setting $h_v(0)=0$ for all $v\in V$. Given $\{h_v(t):v\in V\}$, at time $t+1$, one vertex $u$ is selected uniformly at random  from the set of all vertices that have not been selected yet. Set $h_u(t+1)=\max\{h_w(t):w\in V_u\}+1$ if $\max\{h_w(t):w\in V_u\}<K$, where $V_u$ is the set of neighboring vertices of $u$, and  set $h_u(t+1)=0$ otherwise.
Observe that the height of a vertex can change only once, and in the jammed state no further vertex at zero height can achieve non-zero height. 
Note that $K$ now has a different interpretation than in the Threshold model. In the Tetris model, the number of possible states on any vertex ranges from $0$ and $K$, whereas in the Threshold model the vertices have only two possible states (active/frozen), and $K$ determines ``the flexibility" in the acceptance criterion.
We are interested in the height distribution in the jammed state. Define $\mathcal{N}_i(t):=\{v:h_v(t)=i\}$ and $N_i(t)=|\mathcal{N}_i(t)|$.
We study the scaled version of the vector $(N_1(n),\ldots, N_K(n))$ and refer to $N_i(n)/n$ as the \emph{jamming density} of height $i$.

Again, we assume that the underlying interference graph is an Erd\H{o}s-R\'enyi random graph on $n$ vertices with independent edge probabilities $p_n=c/n$, and we use a suitable exploration algorithm that generates the random graph and the height distribution simultaneously.
\begin{algo}[{Tetris exploration}]\label{algo-tetris}
{\normalfont
At time $t$, we keep track of the set  $\mathcal{A}_k(t)$ of vertices at height $k$, for $0\leq k\leq K$, and the set $\mathcal{U}(t)$ of unexplored vertices. Initialize by putting $\mathcal{A}_{k}=\varnothing$ for $0\leq k\leq K$ and $\mathcal{U}(0)=V$.
Define $\mathcal{A}(t):=\bigcup_{k}\mathcal{A}_k(t)$. At time $t+1$, if $\mathcal{U}(t)$ is nonempty, we select a vertex $v$ from $\mathcal{U}(t)$ uniformly at random and try to pair it with the vertices of $\mathcal{A}(t)$, independently, with probability $p_n$.
 Suppose that the set of all vertices in $\mathcal{A}(t)$ to which the vertex $v$ is paired, is given by $\{v_1,\dots,v_r\}$ for some $r\geq 0$, where each $v_i\in \mathcal{A}_{k_i}(t)$ for some $0\leq k_i\leq K$. Then:
\begin{itemize}
\item When $\max_{i\in[r]} k_i\leq K-1$, set $h_v(t+1)=\max_{i\in[r]} k_i+1$, and $h_u(t+1)=h_u(t)$ for all $u\neq v$.
\item Otherwise $h_u(t+1)=h_u(t)$ for all $u\in V$.
\end{itemize}
The algorithm terminates at time $t=n$, when $\mathcal{U}(t)$ becomes empty, and outputs the  vector $(\mathcal{A}_1(n),\ldots,\mathcal{A}_K(n))$ and a graph $\mathcal{G}(n)$.
}
\end{algo}

\begin{proposition}\label{prop:tetris}
The joint distribution of  $(\mathcal{G}(n),\mathcal{A}_1(n),\ldots,\mathcal{A}_K(n))$ is identical to that of $(G(n,p_n), \mathcal{N}_1(n),\ldots, \mathcal{N}_K(n))$.
\end{proposition}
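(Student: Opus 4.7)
The plan is to realize Algorithm~\ref{algo-tetris} as a sequential coupling with $G(n,p_n)$ in which the potential edges of the graph are only revealed as they become relevant to the height assignment of the next explored vertex. This is the standard principle of deferred decisions, and the proof reduces to three routine verifications. First, the order in which vertices are drawn from $\mathcal{U}(t)$ (one uniform pick at each step) is a uniformly random permutation $\pi$ of $[n]$; I would couple the Tetris dynamics with Algorithm~\ref{algo-tetris} so that both use this same $\pi$, which does not change the marginal law of either.

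Next I would argue that, conditional on $\pi$, the pairings performed by the algorithm produce an Erd\H{o}s-R\'enyi graph. When $v=\pi(t+1)$ is selected, the algorithm decides, by independent $\mathrm{Bernoulli}(p_n)$ trials, the indicator of every pair $\{v,w\}$ with $w\in\mathcal{A}(t)=\{\pi(1),\dots,\pi(t)\}$. These pairs have never been tested in previous rounds, so the samples are independent of the history revealed so far. Aggregating over $t=0,\dots,n-1$, each of the $\binom{n}{2}$ unordered pairs is sampled exactly once, independently, with probability $p_n$. Hence $\mathcal{G}(n)\disteq G(n,p_n)$, and moreover $\mathcal{G}(n)$ is conditionally independent of $\pi$.

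Given the graph and $\pi$, I would then verify that the algorithm's height updates coincide with those of the Tetris model. At the moment $v$ is picked, every unselected neighbor $w$ still has $h_w(t)=0$, so $\max\{h_w(t):w\in V_v\}$ equals the maximum over the $k_i$ for which $v_i\in\mathcal{A}(t)$ is adjacent to $v$, with the empty maximum read as $0$. Algorithm~\ref{algo-tetris} sets $h_v(t+1)=\max+1$ exactly when this maximum is at most $K-1$ and leaves $h_v=0$ otherwise, matching the Tetris definition verbatim. Chaining the three steps yields the distributional equality of Proposition~\ref{prop:tetris}.

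The argument is essentially bookkeeping and I do not anticipate a conceptual obstacle; the mildly delicate point to pin down is that the collection of edges probed at step $t+1$ is disjoint from those probed earlier, so the Bernoulli$(p_n)$ statuses remain mutually independent and accumulate into the product measure defining $G(n,p_n)$. The proof should closely parallel that of Proposition~\ref{prop:thres}, with the simple modification that here only edges to $\mathcal{A}(t)$ (and not to a separate frozen set) are explored at each step.
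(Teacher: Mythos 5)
Your proposal is correct and follows essentially the same route the paper intends: the paper omits the proof of Proposition~\ref{prop:tetris} precisely because it mirrors the coupling proof of Proposition~\ref{prop:thres}, i.e.\ a common uniform permutation for the selection order together with deferred revelation of the pair indicators $U_{i,j}\leq p_n$, each pair being probed exactly once since $\mathcal{A}(t)$ (which already contains the frozen set $\mathcal{A}_0(t)$) is the full set of explored vertices. Your additional observation that unexplored neighbors still have height $0$, so the local maximum is determined by the explored neighbors alone, is exactly the point that makes the algorithm's height rule agree with the Tetris dynamics.
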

Due to Proposition \ref{prop:tetris} the desired height distribution can be obtained from  the scaled output produced by Algorithm \ref{algo-tetris}.
Define $\alpha_k^n(t)=A_k(nt)/n$ as before. Here is then the main result for the Tetris model:
\begin{theorem}[Tetris jamming limit]\label{th:tetris}
The process $\{\bld{\alpha}^n(t)\}_{0\leq t\leq1}$ on the graph $G(n,c/n)$ converges in distribution  to the deterministic process $\{\bld{\alpha}(t)\}_{0\leq t\leq 1}$ that can be described as the unique solution of the integral recursion equation
\begin{equation}\label{eq:alpha-tetris}
 \alpha_k(t)=\int_0^t\delta_k(\bld{\alpha}(s))\dif s,
\end{equation} where
\begin{equation}\label{def:delta-tetris}
 \delta_k(\bld{\alpha})=\begin{cases} \big( 1-\e^{-c\alpha_{k-1}}\big)\e^{-c(\alpha_k+\dots+\alpha_{\sss K})}, \quad &\mathrm{for }\ k\geq 2,\\
 \e^{-c(\alpha_1+\dots+\alpha_K)}, &\mathrm{for }\ k=1.
 \end{cases}
\end{equation}
Consequently, the jamming density of height $k$ converges in distribution to a constant, i.e.,~for all $1\leq k\leq K$,
\begin{equation}
\frac{N_k(n)}{n}\dto \alpha_k(1), \quad {\rm as} \  n\to\infty.
\end{equation}
\end{theorem}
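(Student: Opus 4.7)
The plan is to apply the differential equation method (in the spirit of Wormald/Kurtz) to the Markov chain $(A_0(t),A_1(t),\ldots,A_K(t))$ produced by Algorithm~\ref{algo-tetris}. By Proposition~\ref{prop:tetris} it suffices to analyse this algorithmic chain and pass to the fluid limit. Conditional on $\mathcal{F}_t$, the freshly selected vertex $v$ attaches to each vertex of $\mathcal{A}_j(t)$ independently with probability $p_n=c/n$, so the layer-wise neighbour counts $R_j\sim\mathrm{Bin}(A_j(t),p_n)$ are mutually independent. The vertex $v$ attains height $k\in\{1,\ldots,K\}$ iff $R_K=\cdots=R_k=0$ and, for $k\geq 2$, also $R_{k-1}\geq 1$, yielding the exact one-step transition probabilities
\begin{equation*}
\PR\bigl(h_v(t+1)=k\mid\mathcal{F}_t\bigr)=\bigl(\1_{k=1}+\1_{k\geq 2}(1-(1-p_n)^{A_{k-1}(t)})\bigr)(1-p_n)^{A_k(t)+\cdots+A_K(t)}.
\end{equation*}
Writing $A_j(t)=n\alpha_j^n(t/n)$ and using $(1-c/n)^{n\alpha}=\e^{-c\alpha}+O(1/n)$ uniformly in $\alpha\in[0,1]$, the right-hand side equals $\delta_k(\bld{\alpha}^n(t/n))+O(1/n)$ with $\delta_k$ as in \eqref{def:delta-tetris}, uniformly over the state space.

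I would then invoke the standard martingale machinery. Let $M_k(t)=A_k(t)-\sum_{s<t}\E[A_k(s+1)-A_k(s)\mid\mathcal{F}_s]$ be the Doob martingale; since a single step changes each coordinate by at most one, its increments are bounded by one, and Azuma--Hoeffding gives $n^{-1}\sup_{t\leq n}|M_k(t)|\pto 0$ (in fact at rate $O(\sqrt{\log n /n})$). Summing the one-step drifts, rescaling and approximating the Riemann sum by an integral yields
\begin{equation*}
\alpha_k^n(s)=\int_0^s \delta_k(\bld{\alpha}^n(u))\,\dif u+\varepsilon_k^n(s),\qquad \sup_{s\in[0,1]}|\varepsilon_k^n(s)|\pto 0,
\end{equation*}
where the error term absorbs the uniform $O(1/n)$ drift correction of the previous paragraph as well as the martingale fluctuation.

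Finally, $\bld{\alpha}\mapsto\bld{\delta}(\bld{\alpha})$ is smooth and hence Lipschitz on the compact simplex $\{\bld{\alpha}:\alpha_j\geq 0,\ \sum_j\alpha_j\leq 1\}$, inside which $\bld{\alpha}^n$ always lies. Consequently the integral equation~\eqref{eq:alpha-tetris} with $\bld{\alpha}(0)=\bld{0}$ has a unique solution, and a Gr\"onwall comparison applied to the previous display produces $\sup_{s\in[0,1]}\norm{\bld{\alpha}^n(s)-\bld{\alpha}(s)}\pto 0$, which is the required functional convergence in distribution to a deterministic limit; evaluating at $s=1$ delivers $N_k(n)/n\dto\alpha_k(1)$. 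The one genuinely delicate point is the uniform-in-state control of the drift approximation in the first step; however, because the Tetris acceptance criterion carries no side condition analogous to $\{R_{\sss\leq K-2}\leq K-1\}$ that complicated the Threshold analysis, the drift is strictly simpler than in \eqref{def:delta-th}, so I do not expect any new obstacle beyond a (strict) simplification of the bookkeeping already carried out for Theorem~\ref{th:threshold}.
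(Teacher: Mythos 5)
Your proposal is correct and follows essentially the same route as the paper: reduce to the exploration chain via Proposition~\ref{prop:tetris}, compute the exact one-step transition probabilities (which match the paper's $\expt{\zeta_k^n(t+1)\mid\bld{A}^n(t)}$), show uniform convergence of the rescaled drift to the Lipschitz function $\bld{\delta}$ of \eqref{def:delta-tetris}, kill the martingale part, and close with Gr\"onwall. The only cosmetic difference is that you control the martingale via Azuma--Hoeffding while the paper bounds its quadratic variation using that the increments $\zeta_k^n$ are at most one; both are standard and equivalent here.
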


Figure~\ref{fig:tetris-layer} shows the jamming densities of the different heights for $K=2,3,4$ and increasing average degree $c$. 
\begin{figure}
\begin{subfigure}{.5\textwidth}
  \centering
  \includegraphics[scale=.45]{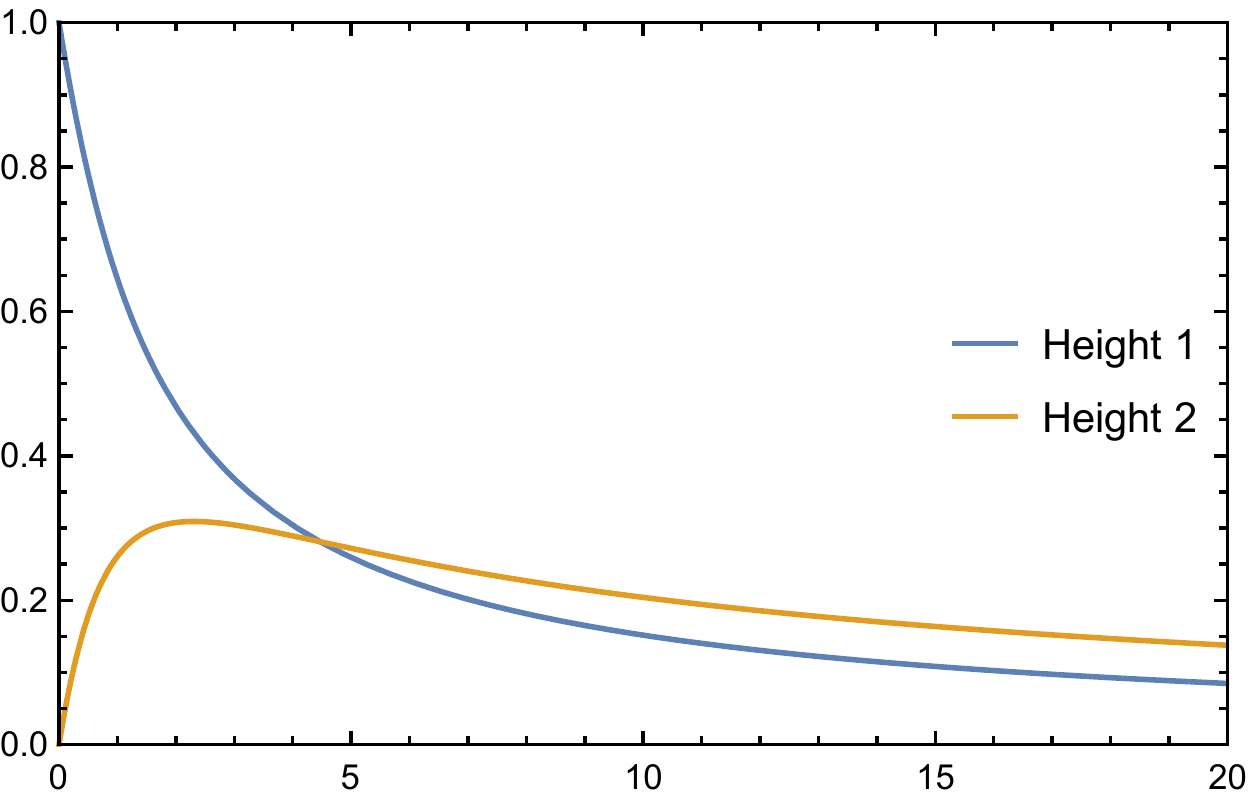}
  \caption{$K=2$}
  \label{fig:sfig1}
\end{subfigure}%
\begin{subfigure}{.5\textwidth}
  \centering
  \includegraphics[scale=.45]{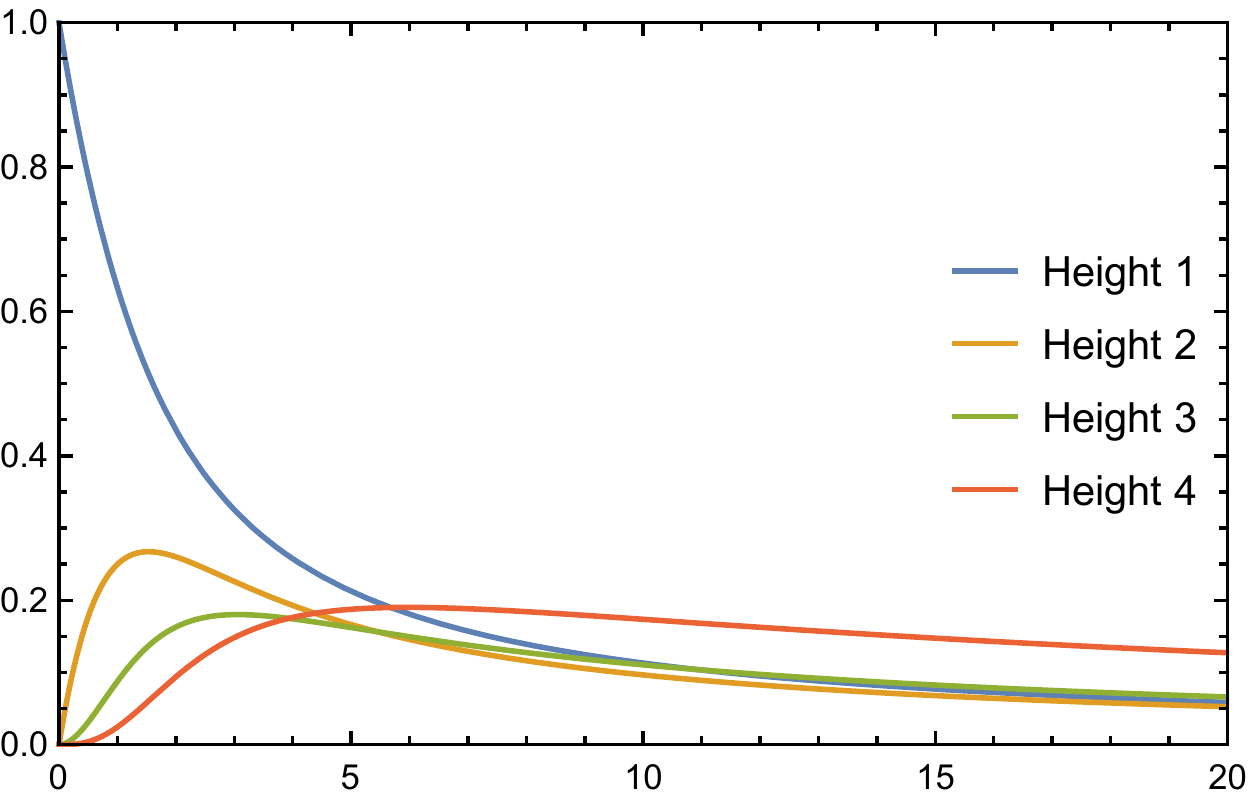}
  \caption{$K=4$}
  \label{fig:sfig3}
\end{subfigure}
\caption{Jamming densities of the different heights in the Tetris model as a function of $c$ for $0\leq c\leq 20$}
\label{fig:tetris-layer}
\end{figure}
Observe that in general the jamming heights do not obey a natural order. 
For $K=2$, for instance, the order of active nodes at heights one and two changes around $c\approx 4.4707$. Similar regime switches occur for large $K$-values as well. In general, for relatively sparse graphs with small $c$, the density of active nodes can be seen to decrease with the height, possibly due to the presence of many small subgraphs (like isolated vertices or pair of vertices). But as $c$ increases, the screening effect becomes prominent, and the densities increase with the height. 
Related phenomena have been observed for parking on $\Z$ \cite{FK09}, and on a random tree \cite{FK10}. 
 However, the models considered in \cite{FK09,FK10} are   different from the ones considered here in the sense that the heights \cite{FK09,FK10} are unbounded (there are no frozen sites as in this paper).
 Furthermore, Fig.~\ref{fig-tetr} displays the fraction of active nodes as a function of the average degree. 
 Notice here also that the jamming constant is increasing with $K$, as expected.


Theorem \ref{th:tetris} also gives the jamming constant $\alpha_1(1)+\cdots+\alpha_K(1)$ for the limiting fraction of active, or non-zero nodes.
For $K=1$ this corresponds to the fraction of nodes contained in the greedy maximal independent set, and as expected, relaxing the hard constraints by introducing more than one height ($K\geq 2$) considerably increases this fraction.
 
\begin{figure}
\begin{center}
\includegraphics[scale=.45]{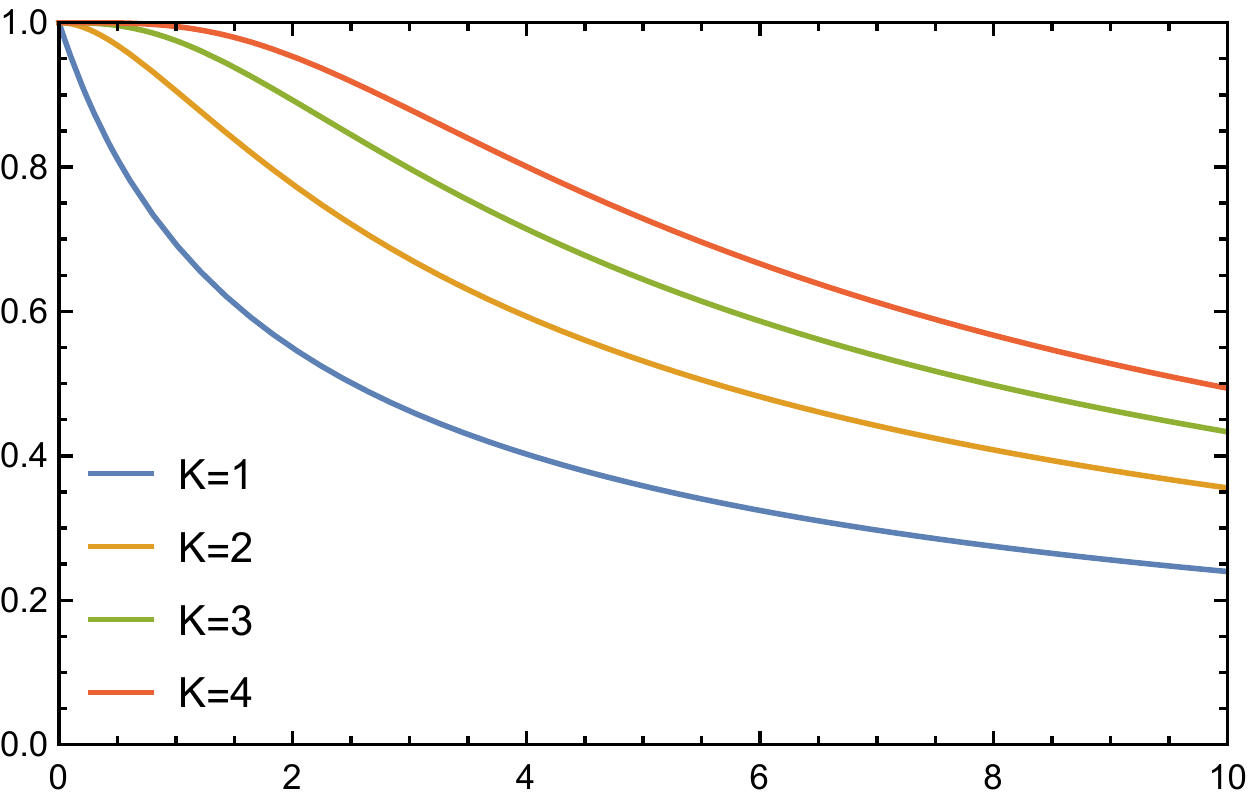}
\end{center}
\caption{Fraction of active nodes in the Tetris model as a function of $c$ for $0\leq c\leq 10$}
\label{fig-tetr}
\end{figure}

\begin{remark}
\normalfont
As in the Threshold model, it can be observed that the number of connections to the set $\mathcal{A}_i$ will be distributed approximately as Poi$(c\alpha_i)$. Now, at any step, a selected vertex $v$ is added to $\mathcal{A}_i$ if and only if it has a connection to at least some vertex in $\mathcal{A}_{i-1}$, and has no connections with $\bigcup_{j=i}^K\mathcal{A}_{j}$. The probability of this event can be recognized in the function $\delta_k$.
\end{remark}

\subsection{SFAP model}
The Sequential Frequency Assignment Process (SFAP) works as follows: Each node can take one of $K$ different frequencies indexed by $\{1,2,\ldots,K\}$, and neigboring nodes are not allowed to have identical frequencies, because this would cause a \emph{conflict}.
One can see that if the underlying graph $G$ is not $K$-colorable, then a conflict-free frequency assignment to all the vertices is ruled out. The converse is also true: If there is a feasible $K$-coloring for the graph $G$, then there exists a conflict-free frequency assignment.
Determining the optimal frequency assignment, in the sense of the maximum number of nodes getting at least some frequency for transmission, can be seen to be NP-hard in general (notice that $K=1$ gives the maximum independent set problem). This creates the need for distributed algorithms that generate a maximal (not necessarily maximum) conflict-free frequency assignment. The SFAP model provides such a distributed scheme~\cite{FD07}. 
As in the Threshold model and Tetris model, the vertices are selected one at a time, uniformly at random amongst those that have not yet been selected. A selected vertex probes its neigbors and selects the lowest available frequency. When all $K$ frequencies are already taken by its neighbors, the vertex gets no frequency and is called {\it frozen}.

Denote by $f_v(t)$ the frequency of the vertex $v$, and by $\mathcal{N}_i(t)$ the set of all vertices using frequency $i$ at time step $t$. 
As before, we are interested in the jamming density $N_i(n)/n$ of each frequency $1\leq i\leq K$. Again we consider the Erd\H{o}s-R\'enyi random graph, and the exploration algorithm is quite similar to that of the Tetris model, except for different local rules for determining the frequencies.

\begin{algo}[{SFAP exploration}]\label{algo-sfap}
{\normalfont
At time $t$, we keep track of the set  $\mathcal{A}_k(t)$ of vertices currently using frequency $k$, for $1\leq k\leq K$, the set $\mathcal{A}_0(t)$ of vertices that have been selected before time $t$, but did not receive a frequency (frozen), and the set $\mathcal{U}(t)$ of unexplored vertices. Initialize by setting $\mathcal{A}_{k}=\varnothing$ for $0\leq k\leq K$ and $\mathcal{U}(0)=V$.
Define $\mathcal{A}(t):=\bigcup_{k}\mathcal{A}_k(t)$.
At time $t+1$, if $\mathcal{U}(t)$ is nonempty, we select a vertex $v$ from $\mathcal{U}(t)$ uniformly at random and try to pair it with all vertices in $\mathcal{A}(t)$, independently with probability $p_n$.
Suppose that the set of all vertices in $\mathcal{A}(t)\setminus\mathcal{A}_0(t)$ to which the vertex $v$ is paired is given by $\{v_1,\dots,v_r\}$ for some $r\geq 0$, where each $v_i\in \mathcal{A}_{k_i}(t)$ for some $1\leq k_i\leq K$. Then:
\begin{itemize}
\item If the set $\mathcal{F}_v(t):=\{1,\dots,K\}\setminus \{k_i:1\leq i\leq r\}$ of non-conflicting frequencies is nonempty, then assign the vertex $v$ the frequency $f_v(t+1)=\min \mathcal{F}_v(t)$, and $f_u(t+1)=f_u(t)$ for all $u\in \mathcal{A}(t)$.
\item Otherwise set $f_v(t+1)=0$, and $f_u(t+1)=f_u(t)$ for all $u\in \mathcal{A}(t)$.
\end{itemize}
The algorithm terminates at time $t=n$ and outputs  $(\mathcal{A}_1(n),\ldots,\mathcal{A}_K(n))$ and a graph $\mathcal{G}(n)$. Again, we can show that this algorithm produces the right distribution.
}
\end{algo}

\begin{proposition}\label{prop:sfap}
The joint distribution of  $(\mathcal{G}(n),\mathcal{A}_1(n),\ldots,\mathcal{A}_K(n))$ is identical to that of $(G(n,p_n)$, $\mathcal{N}_1(n),\ldots, \mathcal{N}_K(n))$.
\end{proposition}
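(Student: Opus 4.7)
My plan is to prove Proposition~\ref{prop:sfap} via the principle of deferred decisions, showing that Algorithm~\ref{algo-sfap} is an on-the-fly realisation of SFAP on a freshly drawn $G(n,p_n)$, with edges sampled only when needed. The guiding observation is that since edges of $G(n,p_n)$ are mutually independent Bernoulli$(p_n)$ variables, we are free to postpone the coin flip for edge $\{u,v\}$ until the first moment that its value is actually queried.

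First, let $\sigma\in S_n$ be the random permutation encoding the order in which vertices leave $\mathcal{U}$ in Algorithm~\ref{algo-sfap}. Because at each step $t+1$ the algorithm draws uniformly from $\mathcal{U}(t)$, a short induction on $t$ yields $\sigma\sim \mathrm{Unif}(S_n)$, independently of every edge variable (these have not yet been sampled at the moment the vertex is chosen). Next, I would track which potential edges the algorithm inspects. For any unordered pair $\{u,v\}$, let $v$ be the later-selected of the two under $\sigma$; then at step $\sigma^{-1}(v)$ the algorithm pairs $v$ with $u\in\mathcal{A}(\sigma^{-1}(v)-1)$ via an independent Bernoulli$(p_n)$ trial, and this is the unique step at which the edge $\{u,v\}$ is examined. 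Consequently every pair is examined exactly once, and the $\binom{n}{2}$ edge indicators are independent Bernoulli$(p_n)$. Thus $\mathcal{G}(n)\disteq G(n,p_n)$, and in fact $\mathcal{G}(n)$ is independent of $\sigma$.

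Finally, given $\sigma$ and the sampled edges, the assignment rule in Algorithm~\ref{algo-sfap} at step $t+1$ picks $f_v = \min \mathcal{F}_v(t)$ where $\mathcal{F}_v(t) = \{1,\dots,K\}\setminus\{k_i:1\leq i\leq r\}$ is the complement of the set of frequencies already held by the neighbours of $v$ among $\mathcal{A}(t)\setminus\mathcal{A}_0(t)$ (equivalently, $v$'s neighbours in $\mathcal{G}(n)$ that precede $v$ in $\sigma$ and received a frequency), and $f_v=0$ otherwise. This is verbatim the SFAP rule applied to the graph $\mathcal{G}(n)$ with vertices processed in the order $\sigma$. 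Since SFAP on $G(n,p_n)$ is by definition this same procedure with an independent uniformly random processing order, we conclude by an induction on $t$ that the joint laws of the state vectors $(\mathcal{A}_1(t),\ldots,\mathcal{A}_K(t))$ and $(\mathcal{N}_1(t),\ldots,\mathcal{N}_K(t))$ agree, jointly with the graph, for every $t$; specialising to $t=n$ gives the proposition.

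No step is genuinely delicate: the argument is parallel to the proofs of Propositions~\ref{prop:thres} and \ref{prop:tetris}. The only point that deserves verification is that every pair $\{u,v\}$ really is examined exactly once by time $n$, which is immediate because each vertex passes through $\mathcal{U}$ exactly once and at that moment it is paired against every vertex already in $\mathcal{A}$, i.e.\ against every previously drawn vertex.
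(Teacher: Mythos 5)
Your argument is correct and is essentially the coupling/deferred-decision argument the paper gives for Proposition~\ref{prop:thres} (whose proof the paper explicitly states carries over to Proposition~\ref{prop:sfap}): each pair is examined exactly once with an independent Bernoulli$(p_n)$ trial, so $\mathcal{G}(n)\disteq G(n,p_n)$, and the frequency decision for a vertex depends only on its edges to previously selected vertices, all of which lie in $\mathcal{A}(t)$ (including the frozen set $\mathcal{A}_0(t)$) and are therefore probed by the algorithm. The only presentational difference is that the paper realises the coupling with pre-sampled uniforms $U_{i,j}$ and a shared permutation so that the outputs agree almost surely, whereas you argue equality in distribution directly; both are valid.
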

Again, define $\alpha_k^n(t)=A_k(nt)/n$.
\begin{theorem}[{SFAP jamming limit}]\label{th:sfap}
The process $\{\bld{\alpha}^n(t)\}_{0\leq t\leq1}$ converges in distribution  to the process $\{\bld{\alpha}(t)\}_{0\leq t\leq 1}$ that can be described as the unique solution to the  deterministic integral recursion equation
\begin{equation}\label{eq:alpha-SFAP}
 \alpha_k(t)=\int_0^t\delta_k(\bld{\alpha}(s))\dif s,
\end{equation} where, for all $1\leq k\leq K$,
\begin{equation}\label{eq:delta-sfap}
 \delta_k(\bld{\alpha})=\e^{-c\alpha_k}\prod_{r=1}^{k-1}\big(1-\e^{-c\alpha_r}\big).
\end{equation}
 Consequently, the jamming density at height $k$ converges in probability to a constant, i.e.~for all $1\leq k\leq K$,
\begin{equation}
\frac{N_k(n)}{n}\dto \alpha_k(1), \quad {\rm as} \ n\to\infty.
\end{equation}
\end{theorem}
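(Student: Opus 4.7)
The plan is to follow the same three-step blueprint used for Theorems~\ref{th:threshold} and \ref{th:tetris}. By Proposition~\ref{prop:sfap}, it suffices to analyze the Markov chain $\bld{A}(t)=(A_1(t),\ldots,A_K(t))$ produced by Algorithm~\ref{algo-sfap}. I will (a) compute the exact conditional one-step drift of $\bld{A}(t)$, (b) show that after rescaling by $n$ it converges to $\bld{\delta}(\bld{\alpha})$ from \eqref{eq:delta-sfap}, (c) establish existence and uniqueness of the limiting integral equation \eqref{eq:alpha-SFAP}, and (d) invoke a Wormald-type fluid-limit argument to upgrade the pointwise drift convergence to functional convergence of $\{\bld{\alpha}^n(t)\}_{0\leq t\leq 1}$ on $[0,1]$.

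For step (a), let $\mathcal{F}_t$ be the $\sigma$-algebra generated by the first $t$ steps of Algorithm~\ref{algo-sfap}. Conditionally on $\mathcal{F}_t$, the vertex $v$ selected at step $t+1$ is paired independently with each already-explored vertex with probability $p_n=c/n$, so if $X_r^{(t)}$ denotes the number of pairings of $v$ with $\mathcal{A}_r(t)$, then $X_1^{(t)},\ldots,X_K^{(t)}$ are mutually independent with $X_r^{(t)}\sim\mathrm{Bin}(A_r(t),p_n)$. By the SFAP rule, $v$ receives frequency $k$ precisely when $X_k^{(t)}=0$ and $X_r^{(t)}\geq 1$ for every $r<k$ (pairings with $\mathcal{A}_j(t)$ for $j>k$ or $j=0$ are irrelevant), so the exact one-step drift is
\begin{equation*}
\E\bigl[A_k(t+1)-A_k(t)\mid\mathcal{F}_t\bigr] = (1-p_n)^{A_k(t)}\prod_{r=1}^{k-1}\bigl(1-(1-p_n)^{A_r(t)}\bigr).
\end{equation*}
Substituting $p_n=c/n$ and $A_r(t)=n\alpha_r^n(t/n)$ and letting $n\to\infty$, each factor $(1-c/n)^{n\alpha_r^n}$ converges uniformly on compact sets to $\e^{-c\alpha_r}$, which recovers $\delta_k(\bld{\alpha})$ in \eqref{eq:delta-sfap}.

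For step (c), note that $\delta_k$ is smooth and hence globally Lipschitz on the compact simplex $\{\bld{\alpha}\geq 0:\sum_j\alpha_j\leq 1\}$, so Picard iteration yields a unique continuous solution $\bld{\alpha}(t)$ to \eqref{eq:alpha-SFAP} on $[0,1]$. For step (d), the standard fluid-limit recipe applies: write
\begin{equation*}
A_k(\lfloor nt\rfloor) \;=\; \sum_{s=0}^{\lfloor nt\rfloor-1}\E\bigl[A_k(s{+}1)-A_k(s)\mid\mathcal{F}_s\bigr] \;+\; M_k^n(\lfloor nt\rfloor),
\end{equation*}
where $M_k^n$ is a martingale with increments bounded by $1$. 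Azuma-Hoeffding then gives $\sup_{t\leq 1}|M_k^n(\lfloor nt\rfloor)|/n\to 0$ in probability, and a discrete Grönwall estimate based on the Lipschitz property of $\bld{\delta}$ shows that the rescaled compensator stays within $o(1)$ of $\int_0^t\delta_k(\bld{\alpha}(s))\dif s$, yielding uniform convergence $\bld{\alpha}^n\to\bld{\alpha}$ in probability on $[0,1]$. The claim $N_k(n)/n\dto\alpha_k(1)$ follows by evaluating at $t=1$.

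The main obstacle is not any single ingredient above but the simultaneous bookkeeping of all $K$ coordinates inside the Grönwall inequality, together with the uniform (in $t$) control of the $K$ martingales $M_k^n$. Conceptually the proof is structurally identical to those of Theorems~\ref{th:threshold} and \ref{th:tetris}; the only genuinely new content is the drift identity and its Poisson-approximation limit, which take the particularly clean product form \eqref{eq:delta-sfap} because the SFAP acceptance rule decouples the role of each $\mathcal{A}_r(t)$ into an independent binomial event.
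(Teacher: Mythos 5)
Your proposal is correct and follows essentially the same route as the paper: compute the exact one-step conditional drift $(1-p_n)^{A_k(t)}\prod_{r=1}^{k-1}\bigl(1-(1-p_n)^{A_r(t)}\bigr)$, pass to the Poisson limit to obtain \eqref{eq:delta-sfap}, and then run the martingale-plus-drift decomposition with a Gr\"onwall argument exactly as in the proofs of Theorems~\ref{th:threshold} and~\ref{th:tetris}. The only cosmetic difference is that you control the martingale via Azuma--Hoeffding with bounded increments rather than the paper's quadratic-variation bound (which here is at most $1/n$ for the same reason), and both yield the required uniform negligibility.
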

It is straightforward to check that the system of equations in \eqref{eq:alpha-SFAP} has the solution
\begin{equation}\label{eq:tetris}
\begin{split}
\alpha_1(t)&=\frac{1}{c}\log(1+ct),\\
\alpha_i(t)&=\frac{1}{c}\log(\e^{c\alpha_{i-1}(t)}-c\alpha_{i-1}(t)),\quad\mbox{for }i\geq 2.
\end{split}
\end{equation}

\begin{figure}
\begin{subfigure}{.5\textwidth}
  \centering
  \includegraphics[scale=.45]{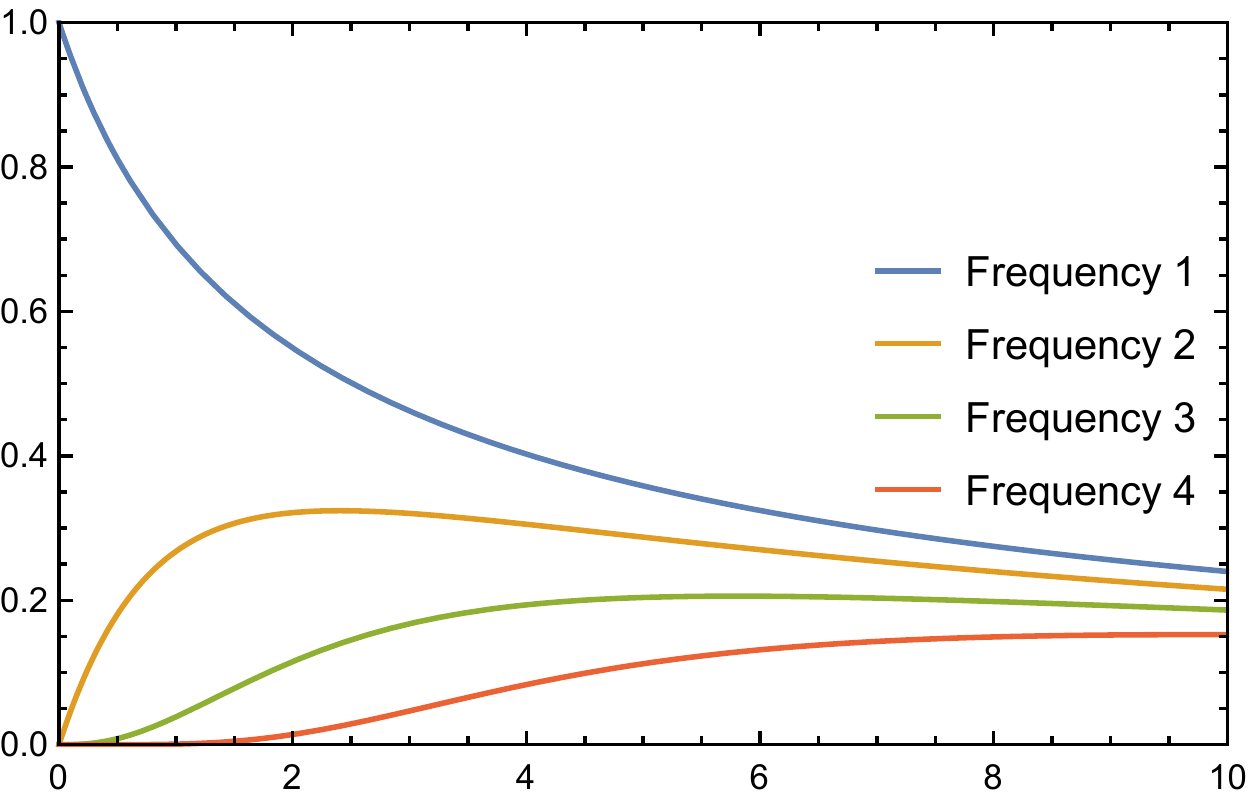}
  \caption{Densities at different frequencies as a function of $c$}
  \label{fig:SFAP}
\end{subfigure}%
\begin{subfigure}{.5\textwidth}
  \centering
  \includegraphics[scale=.45]{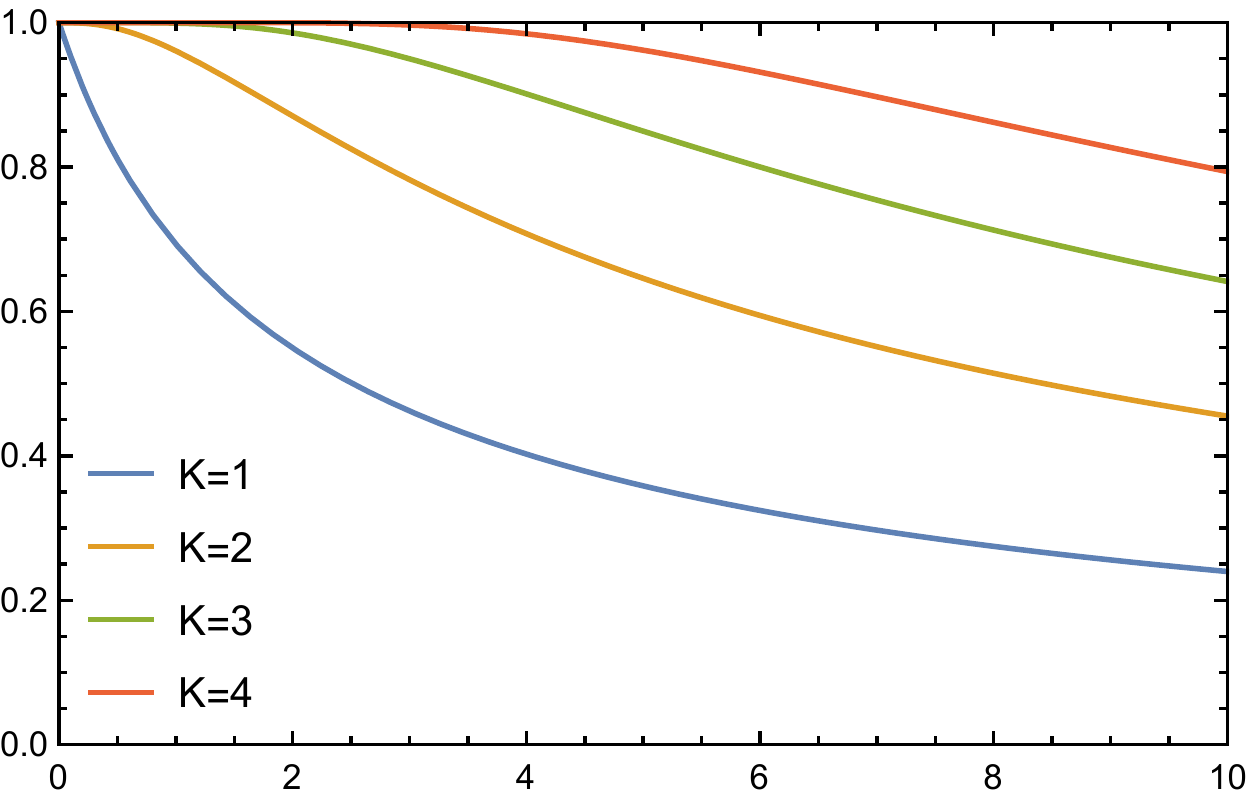}
  \caption{Jamming constant as a function of $c$\vspace{8pt}}
  \label{fig:SFAP-total}
\end{subfigure}
\caption{SFAP  model with $K=4$}
\end{figure}

As in the Tetris model, the proportion of nodes with the same frequency is a relevant quantity.
 We plot the jamming densities for the first four frequencies for increasing values of $c$ in Figure~\ref{fig:SFAP}. Observe that in this case the density decreases with the frequency.
  The total number of active nodes is given by the sum of the heights, as displayed in Figure~\ref{fig:SFAP-total}.

\begin{remark}
\normalfont
Observe that at each step the newly selected vertex $v$ is added to the set $\mathcal{A}_k$ if and only if $v$ has at least some connections to all the sets $\mathcal{A}_j$ with $1\leq j<k$, and has no connections with the set $\mathcal{A}_k$. 
Further, as in the previous cases, since the number of connections to the set $\mathcal{A}_j$, in the limit, is Poisson$(c\alpha_j)$ distributed, we obtain that this probability is given by the function $\delta_k$.
\end{remark}
\begin{remark} \normalfont In the random graphs literature the SFAP model is used as a greedy algorithm for finding an upper bound on the chromatic number of the Erd\H{o}s-R\'enyi random graph \cite{M84,PW97}. However, the SFAP version in this paper uses a fixed $K$, which is why Theorem~\ref{th:sfap} does not approximate the chromatic number, and gives the fraction of vertices that can be colored in a greedy manner with $K$ given colors instead. 
\end{remark}

\section{Proofs}\label{sec:proofs}
In this section we first prove Theorem~\ref{th:threshold} for the Threshold model. The proofs for Theorems~\ref{th:tetris} and \ref{th:sfap} use similar ideas except for the precise details, which is why we present these proofs in a more concise form. For the same reason, we give the proof of Proposition~\ref{prop:thres} and skip the proofs of the similar results in Proposition~\ref{prop:tetris} and Proposition~\ref{prop:sfap}.

\subsection{Proof of Theorem~\ref{th:threshold}}

\emph{Proof of Proposition~\ref{prop:thres}}
The difference between the Threshold model and Algorithm~\ref{algo-th} lies in the fact that the activation process in the Threshold model takes place on a given realization  of $G(n,p_n)$, whereas Algorithm~\ref{algo-th} generates the graph sequentially. To see that $(\mathcal{G}(n),\mathcal{A}(n))$ is indeed distributed as $(G(n,p_n),\mathcal{I}(n))$, it suffices to produce a coupling such that the graphs $G(n,p_n)$ and $\mathcal{G}(n)$ are identical and $\mathcal{I}(t)=\mathcal{A}(t)$ for all $1\leq t\leq n$.
For that purpose, associate an independent uniform$[0,1]$ random variable $U_{i,j}$ to each unordered pair $(i,j)$ both in the Threshold model and in Algorithm~\ref{algo-th}, for $1\leq i<j\leq n$. It can be seen that if we keep only those edges for which $U_{i,j}\leq p_n$, the resulting graph is distributed as $G(n,p_n)$. Therefore, when we create edges in both graphs according to the same random variables $U_{i,j}$, we ensure that $G(n,p_n)=\mathcal{G}(n).$

Now to select vertices uniformly at random from the set of all vertices that have not been selected yet, initially choose a random permutation of the set $\{1,2,\ldots, n\}$ and denote it by $\{\sigma_1,\sigma_2,\ldots,\sigma_n\}$. In both the Threshold model and Algorithm~\ref{algo-th}, at time $t$, select the vertex with index $\sigma_t$. Now, at time $t$, Algorithm~\ref{algo-th} only discovers the edges satisfying $U_{\sigma_t,j}\leq p_n$ for $j\in \mathcal{A}(t)$. Observe that this is enough for deciding whether $\sigma_t$ will be active or not. Therefore, if $\sigma_t$ becomes active in the Threshold model, then it will become active in Algorithm~\ref{algo-th} as well, and vice versa. We thus end up  getting precisely the same set of active vertices in the original model and the algorithm, which completes the proof.
\qed

We now proceed to prove Theorem~\ref{th:threshold}. The proof relies on a decomposition of the rescaled process as a sum of a martingale part and a drift part, and then showing that the martingale part converges to zero and the drift part converges to the appropriate limiting function.
 Let $\xi_k^n(t+1)$ be the number of edges created at step $t+1$ between the random vertex selected at step $t+1$  and the vertices in $\mathcal{A}_k(t)$. Also, for notational consistency, define $\xi_{-1}^n\equiv 0 $, and let $\xi^n(t+1):=\sum_{k=0}^{K-1}\xi_k^n(t+1)$. Recall that an empty sum is taken to be zero. Note that, for any $0\leq k\leq K-1$,
\begin{equation}\label{defn:A-k-n}
 \begin{split}
  A_k^n(t+1)= A_k^n(t)+\zeta_{ k}^n(t+1),
 \end{split}
\end{equation}where
\begin{equation}
 \zeta_{ k}^n(t+1)=\xi_{k-1}^n(t+1)-\xi_k^n(t+1)+ \ind{\xi^{n}(t+1)=k}
\end{equation}
if $\xi^n(t+1)\leq K-1$ and $\xi^n_{K-1}(t+1)=0$, and $\zeta_{ k}^n(t+1)=0$ otherwise.
To see this, observe that at time $t+1$, if the number of new connections to the set of active vertices exceeds $K-1$, or a connection is made to some active vertex that already has $K-1$ active neighbors, then the newly selected vertex cannot become active. Otherwise, the newly selected vertex instantly becomes active, and if the total number of new connections to $\mathcal{A}^n(t)$ is $j$ for some $j\leq K-1$, then $\xi^n_{k}(t+1)$ vertices of $\mathcal{A}_k(t)$ will now have $k+1$ active neighbors, for $0\leq k\leq K-2$, and the newly active vertex will be added to $\mathcal{A}_j(t+1)$.

Observe that $\{\bld{A}^n(t)\}_{t\geq 0}=\{(A_0^n(t),\dots,A_{\sss K-1}^n(t))\}_{t\geq0}$ is an $\R^K$-valued Markov process. Moreover, for any $0\leq k\leq K-1$, given the value of $\bld{A}^n(t)$,
$\xi_k^n(t+1)\sim \mathrm{Bin}(A_k^n(t),p_n)$ and $\xi_0^n(t+1),\dots,\xi_{\sss K-1}^n(t+1)$ are mutually independent when conditioned on $\bld{A}^n(t)$. Write $A_{\leq r}^n(t)= A_1^n(t)+\dots+A_r^n(t)$.
For a random variable $X\sim\mathrm{Bin}(n,p)$, denote $\mathsf{B}(n,p;k)=\prob{X\leq k}$ and $\mathsf{b}(n,p;k)=\prob{X= k}$.
Now we need the following technical lemma:
\begin{lemma}\label{lem:bin-condn-expt}
 Let $X_1,\dots,X_r$ be $r$ independent random variables with $X_i$ distributed as $\mathrm{Bin}(n_i,p)$. Then, for any $1\leq R\leq \sum_{i=1}^rn_i$,
 \begin{subequations}
 \begin{equation}
  \expt{X_i|X_1+\dots+X_r\leq R}=n_ip\frac{\prob{Z_1\leq R-1}}{\prob{Z_2\leq R}}
 \end{equation}
 and
 \begin{equation}
  \expt{X_i(X_i-1)|X_1+\dots+X_r\leq R}\leq \frac{n_i(n_i-1)p^2}{\prob{Z_2\leq R}},
 \end{equation}
 \end{subequations}
 where $Z_1\sim\mathrm{Bin}\left(\sum_{i=1}^rn_i-1,p\right)$ and $Z_2\sim\mathrm{Bin}\left(\sum_{i=1}^rn_i,p\right)$.
\end{lemma}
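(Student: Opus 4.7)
The plan is to reduce both statements to a single size-biasing identity for binomial random variables. Set $S=X_1+\dots+X_r$ and $S_{-i}=\sum_{j\neq i}X_j$, so $S\sim \mathrm{Bin}(N,p)$ with $N=\sum_j n_j$ and $S_{-i}$ is independent of $X_i$. The starting point is the elementary identity, obtained by writing the binomial probability mass function and pulling out one factor of $k$: for $X\sim\mathrm{Bin}(n,p)$ and any bounded function $g$,
\begin{equation*}
\E\bigl[X\,g(X)\bigr]=np\,\E\bigl[g(X'+1)\bigr],\qquad X'\sim\mathrm{Bin}(n-1,p),
\end{equation*}
and the analogous second-order identity
\begin{equation*}
\E\bigl[X(X-1)\,g(X)\bigr]=n(n-1)p^2\,\E\bigl[g(X''+2)\bigr],\qquad X''\sim\mathrm{Bin}(n-2,p).
\end{equation*}
These extend to functions of $(X,S_{-i})$ by conditioning on $S_{-i}$ and using independence.

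For the first identity, I apply the size-biasing identity with $g(x,y)=\1_{\{x+y\le R\}}$ to get
\begin{equation*}
\E\bigl[X_i\,\1_{\{S\le R\}}\bigr]=n_i p\,\PR\bigl(X_i'+1+S_{-i}\le R\bigr)=n_i p\,\PR(Z_1\le R-1),
\end{equation*}
where $Z_1:=X_i'+S_{-i}\sim\mathrm{Bin}(N-1,p)$ by independence. Dividing by $\PR(S\le R)=\PR(Z_2\le R)$ yields the claimed formula. For the second identity, I apply the second-order version of the size-biasing identity to obtain
\begin{equation*}
\E\bigl[X_i(X_i-1)\,\1_{\{S\le R\}}\bigr]=n_i(n_i-1)p^2\,\PR\bigl(X_i''+2+S_{-i}\le R\bigr)\le n_i(n_i-1)p^2,
\end{equation*}
since a probability is bounded by $1$. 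Again dividing by $\PR(Z_2\le R)$ gives the stated upper bound.

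There is no real obstacle here: the proof is essentially one calculation for each part, reusing the fact that size-biasing a $\mathrm{Bin}(n,p)$ shifts both the sample size (down by one) and the outcome (up by one), which is exactly what converts the conditioning event $\{S\le R\}$ into $\{Z_1\le R-1\}$. The only minor care needed is to observe that $X_i'$ (respectively $X_i''$) is independent of $S_{-i}$, which is immediate because independence is preserved when the marginal of one coordinate is replaced according to the size-biasing identity (it is simply an algebraic rearrangement of the joint pmf). The upper bound in the second part is deliberately crude because in the intended application the probability $\PR(X_i''+2+S_{-i}\le R)$ will be close to $1$, and the bound is used only to control second moments in the martingale decomposition of Theorem~\ref{th:threshold}.
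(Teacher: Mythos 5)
Your proof is correct, and it reaches the two identities by a slightly different route than the paper. You size-bias $X_i$ itself: using $k\binom{n_i}{k}=n_i\binom{n_i-1}{k-1}$ you turn $\E\bigl[X_i\,\1_{\{S\le R\}}\bigr]$ directly into $n_ip\,\PR(X_i'+1+S_{-i}\le R)$, exploiting independence of $X_i$ from $S_{-i}$, and the second-moment bound follows from the analogous second-order identity plus the trivial bound $\PR(X_i''+2+S_{-i}\le R)\le 1$. The paper instead first reduces to the law of the sum via exchangeability, $\E[X_i\mid S=j]=n_ij/N$, then applies the same absorption identity to the pmf of $S$ (in the form $\tfrac{j}{pN}\mathsf{b}(N,p;j)=\mathsf{b}(N-1,p;j-1)$) and computes the restricted expectation by complementation, subtracting the tail $\{S>R\}$ from the full expectation $n_ip$; for the second moment it simply uses nonnegativity, $\E[X_i(X_i-1)]\ge\E[X_i(X_i-1)\,\1_{\{S\le R\}}]$. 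Both arguments rest on the same combinatorial identity, but yours avoids the complementation step and gives an exact expression for the truncated second factorial moment before discarding the probability factor, whereas the paper's version needs only the distribution of $S$ and never has to argue that the size-biased coordinate remains independent of the rest. Your parenthetical justification of that independence is a little informal, but the underlying computation (condition on $S_{-i}$, apply the identity to $X_i$, average) is sound, so nothing is missing.
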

\begin{proof}
 Note that $\expt{X_i|X_1+\dots+X_r=j}=n_ij/(n_1+\dots+n_r)$. Therefore,
 \begin{equation}
  \begin{split}
   \expt{X_i}&=\expt{X_i|X_1+\dots+X_r\leq R}\prob{X_1+\dots+X_r\leq R}\\
   &\hspace{.3cm}+\sum_{j=R+1}^{n_1+\dots+n_r}\expt{X_i|X_1+\dots+X_r=j}\prob{X_1+\dots+X_r=j}.
  \end{split}
 \end{equation}Thus, since 
 \begin{equation}
 \frac{j}{p\sum_{i=1}^rn_i}\mathrm{b}\left(\sum_{i=1}^rn_i,p;j\right)= \mathrm{b}\left(\sum_{i=1}^rn_i-1,p;j-1\right)
 \end{equation}
 we get
 \begin{equation}
  \begin{split}
   &\expt{X_i|X_1+\dots+X_r\leq R}\\
   =&\frac{n_ip}{\prob{X_1+\dots+X_r\leq R}}\bigg(1-\frac{1}{p\sum_{i=1}^rn_i}\sum_{j=R+1}^{n_1+\dots+n_r}j \prob{X_1+\dots+X_r=j}\bigg)\\
   =&\frac{n_ip}{\prob{Z_2\leq R}}(1-\prob{Z_1\geq R})=n_ip\frac{\prob{Z_1\leq R-1}}{\prob{Z_2\leq R}}.
  \end{split}
 \end{equation}
Further,
 \begin{equation}
 \begin{split}
  n_i(n_i-1)p^2&=\expt{X_i(X_i-1)}\\
  &\geq \expt{X_i(X_i-1)|X_1+\dots+X_r\leq R}\prob{Z_2\leq R}
  \end{split}
 \end{equation}
 and the proof is complete.
 \qed
\end{proof}
Using Lemma~\ref{lem:bin-condn-expt} we get the following expected values:
\begin{equation}
 \begin{split}
  &\expt{\xi_k^n(t+1)\ind{\xi^n(t+1) \leq K-1, \xi^n_{\sss K-1}(t+1)=0}\big\vert \bld{A}^n(t)}\\
  &=A_k^n(t)p_n(1-p_n)^{A_{K-1}^n(t)}\mathsf{B}(A_{\sss \leq K-2}^n(t)-1,p_n; K-2),
 \end{split}
\end{equation}
and thus, for $0\leq k\leq K-1$,
\begin{equation}
\begin{split}
&\expt{\zeta^n_k(t+1)|\bld{A}^n(t)}\\
&=(A_{k-1}^n(t)-A_k^n(t))p_n(1-p_n)^{A_{K-1}^n(t)}\mathsf{B}(A_{\sss \leq K-2}^n(t)-1,p_n; K-2)\\
&\hspace{1cm}+\mathsf{b}(A^n_{\sss \leq K-2},p_n;k)(1-p_n)^{A^n_{\sss K-1}(t)},
\end{split}
\end{equation}
where $A_{-1}^n\equiv A_K^n\equiv 0$.
For $\boldsymbol{i}=(i_0,\dots,i_{\sss K-1})\in\{1,\ldots,n\}^K$, define the drift function
\begin{equation}
\Delta_k^n(\bld{i}):=\expt{\zeta^n_k(t+1)\given \bld{A}^n(t)=\bld{i}}.
\end{equation}
 Denote $\delta_k^n(\boldsymbol{\alpha}):=\Delta_k^n(n\boldsymbol{\alpha})$ for $\bld{\alpha}\in [0,1]^{K}$, and 
 $$\bld{\delta}^n(\bld{\alpha}):=(\delta_0^n(\boldsymbol{\alpha}),\ldots,\delta_{K-1}^n(\boldsymbol{\alpha})).$$
 Recall the definition of $\bld{\delta}(\bld{\alpha})=(\delta_0(\bld{\alpha}),\dots, \delta_{K-1}(\bld{\alpha}))$ in \eqref{def:delta-th}.
\begin{lemma}[Convergence of the drift function]\label{lem:lips-cont}
The time-scaled drift function $\bld{\delta}^n$ converges uniformly on $[0,1]^K$ to the Lipschitz-continuous function $\bld{\delta}:[0,1]^K\mapsto [0,1]^K$.
\end{lemma}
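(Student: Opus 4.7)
The plan is to treat the two claims of the lemma---uniform convergence of $\bld{\delta}^n$ to $\bld{\delta}$ on $[0,1]^K$ and Lipschitz continuity of the limit---separately, with the former being the real content of the statement. Both reduce to standard Poisson-approximation estimates applied uniformly in the parameters; no fundamentally new ingredient is required beyond careful bookkeeping of error terms.

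For the uniform convergence, I would substitute $\bld{A}^n(t)=n\bld{\alpha}$ into the closed-form expression for $\expt{\zeta^n_k(t+1)\given \bld{A}^n(t)}$ displayed just above Lemma~\ref{lem:bin-condn-expt}. This decomposes $\delta_k^n(\bld{\alpha})$ as a finite sum and product of three elementary building blocks: the affine prefactor $(A_{k-1}^n-A_k^n)p_n=c(\alpha_{k-1}-\alpha_k)$; the ``no active-to-active edge'' factor $(1-c/n)^{n\alpha_{K-1}}$; and the Binomial point and tail masses $\mathsf{b}(n\alpha_{\leq K-2},c/n;k)$ and $\mathsf{B}(n\alpha_{\leq K-2}-1,c/n;K-2)$. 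Each of these converges uniformly in the relevant $\alpha$-coordinate over $[0,1]$ to its Poisson counterpart: writing $\lfloor n\alpha\rfloor\log(1-c/n)=-c\alpha+O(n^{-1})$ handles the no-edge factor, and combining this with the elementary identity $\binom{\lfloor n\alpha\rfloor}{k}(c/n)^k=(c\alpha)^k/k!+O(n^{-1})$ yields the Poisson masses $\e^{-c\alpha_{\leq K-2}}(c\alpha_{\leq K-2})^k/k!$ and $\sum_{r=0}^{K-2}\e^{-c\alpha_{\leq K-2}}(c\alpha_{\leq K-2})^r/r!$. Since finite products and sums of uniformly bounded, uniformly convergent real-valued functions again converge uniformly, one obtains $\|\bld{\delta}^n-\bld{\delta}\|_\infty\to 0$ on $[0,1]^K$, with the limit identified term-by-term with the right-hand side of \eqref{def:delta-th}.

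For the Lipschitz property, each $\delta_k$ in \eqref{def:delta-th} is a polynomial in $(\alpha_0,\dots,\alpha_{K-1})$ (with coefficients depending only on $c$ and $K$) multiplied by the exponential $\e^{-c\alpha_{\leq K-1}}$, and is therefore $C^\infty$ on all of $\R^K$. Its gradient is uniformly bounded on the compact cube $[0,1]^K$, so the mean-value inequality delivers Lipschitz continuity with an explicit constant depending only on $c$ and $K$. I do not anticipate a genuine obstacle, but one minor boundary technicality deserves attention: when $n\alpha_{\leq K-2}\in\{0,1\}$ the argument $n\alpha_{\leq K-2}-1$ of $\mathsf{B}$ is non-positive, in which case one interprets the Binomial tail as $1$ (empty Binomial); the accompanying prefactor $c(\alpha_{k-1}-\alpha_k)$ then vanishes to the required order, so the uniform statement extends across this boundary without incident.
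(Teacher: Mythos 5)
Your proposal is correct, and for the uniform-convergence half it takes a genuinely different (and more careful) route than the paper. The paper's own proof is two sentences: it notes that $\bld{\delta}$ is continuously differentiable on a compact set, hence Lipschitz, and then asserts that $\bld{\delta}^n\to\bld{\delta}$ pointwise, with uniform convergence claimed to follow ``from the continuity of $\bld{\delta}$ and the compactness of the support.'' As a general principle that implication is false (pointwise convergence to a continuous limit on a compact set does not give uniform convergence; think of $f_n(x)=nx\e^{-nx}$ on $[0,1]$), so the paper is implicitly relying on exactly the kind of uniform error estimates you spell out. Your decomposition of $\delta_k^n$ into the affine prefactor, the no-edge factor $(1-c/n)^{\lfloor n\alpha_{K-1}\rfloor}$, and the Binomial point and tail masses, each shown to converge with an $O(n^{-1})$ error uniform in $\bld{\alpha}\in[0,1]^K$, closes that gap honestly; the observation that finite sums and products of uniformly bounded, uniformly convergent functions converge uniformly then finishes it. Your treatment of the Lipschitz property (smoothness of a polynomial times an exponential, bounded gradient on the cube, mean-value inequality) coincides with the paper's. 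The boundary remark about $n\alpha_{\leq K-2}-1<0$ is a sensible piece of bookkeeping that the paper silently skips. In short: what the paper's argument buys is brevity; what yours buys is an actual proof of the uniformity, which is the substantive content of the lemma.
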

\begin{proof}
Observe that $\bld{\delta}(\cdot)$ is continuously differentiable, defined on a compact set, and hence, is Lipschitz continuous. Also, $\bld{\delta}^n$ converges to $\bld{\delta}$ point-wise and the uniform convergence is a consequence of the continuity of $\bld{\delta}$ and the compactness of the support.
\qed
\end{proof}

Recall that $A_k^n(0)=0$ for $0\leq k\leq K-1$. The Doob-Meyer decomposition of \eqref{defn:A-k-n} gives
\begin{equation}
\begin{split}
 A_k^n(t)&=\sum_{i=1}^t\zeta_k^n(i)=M_k^n(t)+\sum_{i=1}^t\expt{\zeta_k^n(i)|\bld{A}^n(i-1)},
 \end{split}
\end{equation}where $(M_k^n(t))_{t\geq 1}$ is a locally square-integrable martingale. We can write
\begin{equation}\label{eq:mart-decompose}
 \begin{split}
  \alpha_k^n(t)&=\frac{M_k^n(\floor{nt})}{n}+\frac{1}{n}\sum_{i=1}^{\floor{nt}}\Delta_k^n(\bld{A}^n(i-1))\\
  &=\frac{M_k^n(\floor{nt})}{n}+\frac{1}{n}\int_0^{\floor{nt}-1}\Delta_k^n(\bld{A}^n(s))\dif s\\
  &=\frac{M_k^n(\floor{nt})}{n}+\int_0^{t}\Delta_k^n(\bld{A}^n(ns))\dif s-\int_{(\floor{nt}-1)/n}^t\Delta_k^n(\bld{A}^n(ns))\dif s\\
  &= \frac{M_k^n(\floor{nt})}{n}+\int_0^{t}\delta_k^n(\bld{\alpha}^n(s))\dif s-\int_{(\floor{nt}-1)/n}^t\delta_k^n(\bld{\alpha}^n(s))\dif s.
 \end{split}
\end{equation}
First we show that the martingale terms converge to zero. \begin{lemma}\label{lem:conv-mart}For all $0\leq k\leq K-1$, as $n\to\infty$
\begin{equation}\label{eq:mart-conv}
\sup_{s\in [0,1]}\frac{|M_k^n(ns)|}{n}\dto 0.
\end{equation}
\end{lemma}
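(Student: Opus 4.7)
The plan is to apply Doob's maximal inequality after showing that the martingale increments are uniformly bounded. The martingale $M_k^n$ has increments
\[
 M_k^n(t+1) - M_k^n(t) = \zeta_k^n(t+1) - \E[\zeta_k^n(t+1)\mid \bld{A}^n(t)],
\]
so the crucial observation is that $\zeta_k^n(t+1)$ itself is bounded by a constant depending only on $K$ (and in particular, not on $n$). Indeed, from the definition
\[
 \zeta_k^n(t+1) = \xi_{k-1}^n(t+1) - \xi_k^n(t+1) + \ind{\xi^n(t+1) = k},
\]
which is active only on the event $\{\xi^n(t+1)\leq K-1,\ \xi^n_{K-1}(t+1)=0\}$. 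On this event all $\xi_j^n(t+1)$ sum to at most $K-1$, so in particular $\xi_{k-1}^n(t+1)+\xi_k^n(t+1)\leq K-1$. Together with the indicator this yields $|\zeta_k^n(t+1)|\leq K$ almost surely, and therefore
\[
 \big|M_k^n(t+1)-M_k^n(t)\big|\leq 2K
\]
for every $t$, uniformly in $n$.

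Next I would bound the predictable quadratic variation. Since the increments are bounded by $2K$,
\[
 \langle M_k^n\rangle(n) \;=\; \sum_{i=1}^{n}\mathrm{Var}\!\left(\zeta_k^n(i)\mid \bld{A}^n(i-1)\right) \;\leq\; 4K^2\, n.
\]
Doob's $L^2$-maximal inequality applied to the discrete-time martingale $(M_k^n(t))_{0\leq t\leq n}$, together with the fact that $s\mapsto M_k^n(\lfloor ns\rfloor)$ is piecewise constant so the supremum over $s\in[0,1]$ is a maximum over the integer times $t\in\{0,1,\dots,n\}$, gives for any $\varepsilon>0$
\[
 \PR\!\left(\sup_{s\in[0,1]}\frac{|M_k^n(\lfloor ns\rfloor)|}{n}>\varepsilon\right) \;\leq\; \frac{\E[M_k^n(n)^2]}{\varepsilon^2 n^2} \;=\; \frac{\E[\langle M_k^n\rangle(n)]}{\varepsilon^2 n^2} \;\leq\; \frac{4K^2}{\varepsilon^2 n},
\]
which tends to $0$ as $n\to\infty$. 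This yields \eqref{eq:mart-conv}.

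I do not expect a genuine obstacle here; the only slightly delicate point is verifying the almost-sure bound $|\zeta_k^n(t+1)|\leq K$, which relies on carefully reading off the algorithm that $\zeta_k^n(t+1)=0$ as soon as the newly-selected vertex has $K$ or more active neighbors (or touches $\mathcal{A}_{K-1}$). Once this is in hand, everything else is the standard boundedness $+$ Doob route, and the argument in fact gives the stronger statement of convergence in $L^2$.
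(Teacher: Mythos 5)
Your argument is correct, and it reaches the same destination as the paper --- a bound on the predictable quadratic variation of order $O(n)$, hence $\langle M_k^n\rangle(n)/n^2\to 0$, followed by a maximal inequality --- but the key estimate is obtained by a genuinely different and more elementary route. The paper bounds $\var{\zeta_k^n(i)\given \bld{A}^n(i-1)}$ by computing conditional second factorial moments of the binomial variables $\xi_k^n$ via Lemma~\ref{lem:bin-condn-expt}, arriving at a bound of order $c^2$ per step; you instead observe the almost-sure bound $|\zeta_k^n(t+1)|\leq K$, which follows directly from the definition because $\zeta_k^n$ is nonzero only on the event $\{\xi^n(t+1)\leq K-1,\ \xi_{K-1}^n(t+1)=0\}$, where all the $\xi_j^n(t+1)$ are nonnegative and sum to at most $K-1$. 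Your observation is valid (including the edge case $k=0$, where $\xi_{-1}^n\equiv 0$), it bypasses Lemma~\ref{lem:bin-condn-expt} entirely for this step, and it yields a deterministic per-step variance bound $K^2$ rather than a bound valid only for $n$ large. You also make explicit the final step --- Doob's $L^2$ maximal inequality over the integer time grid together with $\expt{M_k^n(n)^2}=\expt{\langle M_k^n\rangle(n)}$ --- which the paper leaves implicit in the phrase ``this proves \eqref{eq:mart-conv}''. The trade-off is minor: the paper's moment computation reuses machinery (Lemma~\ref{lem:bin-condn-expt}) that is needed anyway for the drift, and would still work in variants of the model where the increments are not uniformly bounded, whereas your argument is shorter, self-contained, and, as you note, gives convergence in $L^2$ of the scaled maximal function.
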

\begin{proof}
The scaled quadratic variation term can be written as
 \begin{equation}\label{expr:qv}
  \begin{split}
  \frac{1}{n^2}\langle M_k^n \rangle(\floor{ns})&=\frac{1}{n^2}\sum_{i=1}^{\floor{ns}}\var{\zeta_k^n(i)|\bld{A}^n(i-1)}.
  \end{split}
 \end{equation} Now, using Lemma~\ref{lem:bin-condn-expt} we get,
 \begin{equation}
 \begin{split}
  &\expt{\xi_k^n(i)(\xi_k^n(i)-1)\ind{\xi^n(t+1) \leq K-1, \xi^n_{\sss K-1}(t+1)=0}\big\vert\bld{A}^n(i-1)}\\
  &\leq  A_k^n(i-1)(A_k^n(i-1)-1)p_n^2(1-p_n)^{A_{\sss K-1}^n(i-1)}\leq c^2
  \end{split}
 \end{equation} for all large enough $n$, where we have used that $A_k^n(i-1)\leq n$ and $p_n=c/n$ in the last step. Thus, there exists a constant $C>0$ such that for all large enough $n$,
 \begin{equation}
  \expt{\zeta_k^n(i)|\bld{A}^n(i-1)}\leq C.
\end{equation}  Therefore, \eqref{expr:qv} implies $
  \langle M_k^n\rangle/n^2\dto 0$ and this proves \eqref{eq:mart-conv}.
  \qed
\end{proof} 
Also, Lemma~\ref{lem:lips-cont} implies that $\sup_{n\geq 1}\sup_{\bld{x}\in [0,1]^{K}}|\delta_k^n(\bld{x})|<\infty $ for any $0\leq k\leq K-1$. Therefore,
 \begin{equation}
  \int_{(\floor{nt}-1)/n}^t\delta_k^n(\bld{\alpha}^n(s))\dif s\leq \varepsilon_n',
 \end{equation}
 where $\varepsilon_n'$ is non-random, independent of $t,k$ and $\varepsilon_n'\to 0$. Thus, for any $t\in [0,1]$,
\begin{equation}\label{eq:alpha-diff}
\begin{split}
 &\sup_{s\in [0,t]}\norm{\bld{\alpha}^n(s)-\bld{\alpha}(s)}\\
 &\leq \sup_{s\in [0,t]}\frac{\norm{\bld{M}_n(ns)}}{n}+\int_0^t\sup_{u\in [0,s]}\norm{\bld{\delta}^n(\bld{\alpha}^n(u))-\bld{\delta}(\bld{\alpha}(u))}\dif s +\varepsilon'_n.
 \end{split}
\end{equation} Now, since $\bld{\delta}$ is a Lipschitz-continuous function, there exists a constant $C>0$ such that $\norm{\bld{\delta}(\bld{x})-\bld{\delta}(\bld{y})}\leq C \norm{\bld{x}-\bld{y}}$ for all $\bld{x},\bld{y}\in [0,1]^K$. Therefore,
\begin{equation}
 \begin{split}\label{eq:delta-alpha-diff}
  \sup_{u\in [0,s]}\norm{\bld{\delta}^n(\bld{\alpha}^n(u))-\bld{\delta}(\bld{\alpha}(u))}\leq \sup_{\bld{x}\in [0,1]^K}&\norm{\bld{\delta}^n(\bld{x})-\bld{\delta}(\bld{x})}\\
  & + C\sup_{u\in [0,s]} \norm{\bld{\alpha}^n(u)-\bld{\alpha}(u)}.
 \end{split}
\end{equation}
Lemma~\ref{lem:lips-cont}, \eqref{eq:alpha-diff} and \eqref{eq:delta-alpha-diff} together imply that
\begin{equation}
 \begin{split}
  \sup_{s\in [0,t]}\norm{\bld{\alpha}^n(s)-\bld{\alpha}(s)}&\leq  C\int_0^t\sup_{u\in [0,s]} \norm{\bld{\alpha}^n(u)-\bld{\alpha}(u)}\dif s+ \varepsilon_n,
 \end{split}
\end{equation}
where $\varepsilon_n\dto 0$.
Using Gr\H{o}nwall's inequality \cite[Proposition 6.1.4]{ODE_martin}, we get
\begin{equation}
 \sup_{s\in [0,t]}\norm{\bld{\alpha}^n(s)-\bld{\alpha}(s)}\leq  \varepsilon_n \e^{Ct}.
\end{equation}
Thus the proof of Theorem~\ref{th:threshold} is complete.
\qed

\subsection{Proof of Theorem~\ref{th:tetris}} The proof of Theorem~\ref{th:tetris} is similar to the proof of Theorem~\ref{th:threshold}. Again denote $A^n_k(t)=|\mathcal{A}_k(t)|$, where $\mathcal{A}_k(t)$ is the number of active vertices at height $k$ at time $t$. Note here that $A^n_0(t)$  is the set of frozen vertices.
Let $\xi_k^n(t+1)$ be the number of vertices in $\mathcal{A}_k(t)$ that are paired to the vertex selected at time $t+1$ by Algorithm~\ref{algo-tetris}.
Then, for $1\leq k\leq K$,
 \begin{equation}
  A_k^n(t+1)=A_k^n(t)+\zeta_k^n(t+1),
 \end{equation}
 where, for $k\geq 2$,
 \begin{equation}
 \begin{split}
  \zeta_k^n(t+1)&=
   \begin{cases}
   1 \quad \text{if } \xi_r^n(t+1)=0, \ \forall r\geq k, \ \xi_{k-1}^n(t+1)>0,\\
   0 \quad  \text{otherwise},
   \end{cases}\\
   \zeta_1^n(t+1)&=
   \begin{cases}1 \quad \text{if } \xi_r^n(t+1)=0, \ \forall r\geq 1,\\
   0 \quad  \text{otherwise},
   \end{cases}
   \end{split}
 \end{equation}
Indeed, observe that if $j$ is the maximum index for which the new vertex selected at time $t+1$ makes a connection to $A_j^n(t)$,  and $j\leq K-1$, then $v$ will be assigned height $j+1$.
Therefore,
\begin{equation}
\begin{split}
 &\expt{\zeta_k^n(t+1)|\bld{A}^n(t)}
 \\&=
 \begin{cases}
  \big(1-(1-p_n)^{A_{k-1}^n(t)}\big)(1-p_n)^{A_k^n(t)+\dots+A_{\sss K}^n(t)}, & \text{for } k\geq 2,\\
  (1-p_n)^{A_1^n(t)+\dots+A_K^n(t)},& \text{for } k=1.
 \end{cases}
 \end{split}
\end{equation}
For $\boldsymbol{i}=(i_1,\dots,i_{\sss K})\in[n]^{K}$, define the drift rate functions
\begin{equation}
 \Delta_k^n(\bld{i})=\Delta_k^n(\bld{i}):=\expt{\zeta^n_k(t+1)\given \bld{A}^n(t)=\bld{i}},
\end{equation}
and denote  $\delta_k^n(\boldsymbol{\alpha})=\Delta_k^n(n\boldsymbol{\alpha})$ for $\bld{\alpha}\in [0,1]^{K}$, $\bld{\delta}^n(\bld{\alpha})=(\delta_1^n(\boldsymbol{\alpha}),\dots,\delta_{K}^n(\boldsymbol{\alpha}))$.
Also, let $\bld{\delta}(\bld{\alpha})=(\delta_1(\bld{\alpha}),\dots, \delta_{K}(\bld{\alpha}))$ where we recall the definition of $\delta_k(\cdot)$ from
\eqref{def:delta-tetris}.
\begin{lemma}[Convergence of the drift function]\label{lem:lips-cont-t}
The time-scaled drift function $\bld{\delta}^n$ converges uniformly on $[0,1]^{K}$ to the Lipschitz continuous function $\bld{\delta}:[0,1]^{K}\mapsto [0,1]$.
\end{lemma}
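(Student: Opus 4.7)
The plan is to follow the same template used in the proof of Lemma~\ref{lem:lips-cont}. First I would substitute $p_n = c/n$ into the conditional expectation computed just before the lemma statement. For $k \geq 2$ this gives
\begin{equation*}
\delta_k^n(\bld{\alpha}) = \big(1 - (1 - c/n)^{\lfloor n\alpha_{k-1} \rfloor}\big)(1 - c/n)^{\lfloor n\alpha_k \rfloor + \cdots + \lfloor n\alpha_K \rfloor},
\end{equation*}
and $\delta_1^n(\bld{\alpha}) = (1 - c/n)^{\lfloor n\alpha_1 \rfloor + \cdots + \lfloor n\alpha_K \rfloor}$. Comparing with \eqref{def:delta-tetris} makes pointwise convergence $\bld{\delta}^n(\bld{\alpha}) \to \bld{\delta}(\bld{\alpha})$ on $[0, 1]^K$ transparent via the elementary limit $(1 - c/n)^{n\beta} \to \e^{-c\beta}$.

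Next I would upgrade pointwise to uniform convergence by using the estimate
\begin{equation*}
\sup_{\beta \in [0, K]} \big| (1 - c/n)^{\lfloor n\beta \rfloor} - \e^{-c\beta} \big| = O(1/n),
\end{equation*}
which follows from $\log(1 - c/n) = -c/n + O(1/n^2)$ together with $|\e^{x} - \e^{y}| \leq \e^{\max(|x|, |y|)} |x - y|$ and the bound $|\lfloor n\beta \rfloor/n - \beta| \leq 1/n$. Since each component of $\bld{\delta}^n$ is a bounded polynomial combination of a fixed finite number of factors of the form $(1 - c/n)^{\lfloor n\beta \rfloor}$ with $\beta \in [0, K]$, this estimate propagates to $\sup_{\bld{\alpha} \in [0, 1]^K} \norm{\bld{\delta}^n(\bld{\alpha}) - \bld{\delta}(\bld{\alpha})} = O(1/n)$, yielding the claimed uniform convergence.

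Finally, inspection of \eqref{def:delta-tetris} shows that $\bld{\delta}$ is continuously differentiable on the compact cube $[0, 1]^K$, so its partial derivatives are bounded and $\bld{\delta}$ is Lipschitz continuous. There is no genuine obstacle here: the lemma is essentially bookkeeping, and the only step with any content is the uniform estimate above, which is routine.
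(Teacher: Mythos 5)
Your proof is correct, and it follows the same overall template as the paper: the paper's proof of Lemma~\ref{lem:lips-cont-t} simply says ``the same arguments as used in the proof of Lemma~\ref{lem:lips-cont}'', which amounts to (i) $\bld{\delta}$ is $C^1$ on a compact set, hence Lipschitz, and (ii) $\bld{\delta}^n\to\bld{\delta}$ pointwise, with uniform convergence ``a consequence of the continuity of $\bld{\delta}$ and the compactness of the support''. Where you genuinely differ is in step (ii): pointwise convergence to a continuous limit on a compact set does \emph{not} in general imply uniform convergence (one needs something extra, e.g.\ monotonicity for Dini's theorem or equicontinuity of the family $\{\bld{\delta}^n\}$), so the paper's one-line justification is, as stated, logically incomplete. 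Your explicit bound $\sup_{\beta\in[0,K]}\bigl|(1-c/n)^{\floor{n\beta}}-\e^{-c\beta}\bigr|=O(1/n)$, propagated through the finitely many bounded factors in \eqref{def:delta-tetris} via $\bigl|\prod a_i-\prod b_i\bigr|\leq\sum_i|a_i-b_i|$ for factors in $[0,1]$, supplies exactly the missing quantitative argument and even yields a rate. So your route is the more careful one; the paper's buys brevity at the cost of an unjustified (though true in this instance) implication.
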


The above lemma can be seen from the same arguments as used in the proof of Lemma~\ref{lem:lips-cont}. In this case also, we can obtain a martingale decomposition similar to \eqref{eq:mart-decompose}. Here, the increments $\zeta_k^n(\cdot)$ values are at most 1. Therefore, the quadratic variation of the scaled martingale term is at most $1/n$. Hence one obtains the counterpart of Lemma~\ref{lem:conv-mart} in this case, and the proof can be completed using similar arguments as in the proof of Theorem~\ref{th:tetris}.
\qed

\subsection{Proof of Theorem~\ref{th:sfap}}

As in the previous section, we only compute the drift function and the rest of the proof is similar to the proof of Theorem~\ref{th:threshold}.
Let $A^n_k(t)=|\mathcal{A}_k(t)|$, where $\mathcal{A}_k(t)$ is obtained from Algorithm~\ref{algo-sfap}, and let
 $\xi_k^n(t+1)$ be the number of vertices of $\mathcal{A}_k(t)$ that are paired to the vertex selected randomly among the set of unexplored vertices at time $t+1$.
Then,
 \begin{equation}
  A_k^n(t+1)=A_k^n(t)+\zeta_k^n(t+1),
 \end{equation}
 where, for any $1\leq k\leq K$,
 \begin{equation}
  \zeta_k^n(t+1)=
   \begin{cases}1 \quad \text{if } \xi_r^n(t+1)>0, \ \forall\ 1\leq r< k, \text{ and } \xi_{k}^n(t+1)=0,\\
   0 \quad  \text{otherwise}.
   \end{cases}
 \end{equation}
This follows by observing that the new vertex selected at time $t+1$ is assigned frequency $j$, for some $j\leq K$, if and only if the new vertex makes no connection with $\mathcal{A}^n_j(t)$, and has at least one connection with $\mathcal{A}_k^n(t)$ for all $1\leq k\leq j-1$. Hence the respective expectations can be written as
\begin{equation}
 \expt{\zeta_k^n(t+1)|\bld{A}^n(t)}=(1-p_n)^{A_{k}^n(t)}\prod_{r=1}^{k-1}\left(1-(1-p_n)^{A_r^n(t)}\right),
\end{equation}
for $1\leq k\leq K$.
  Defining the functions $\Delta$, $\delta$ suitably, as in the proof of the Tetris model, the current proof can be completed in the exact same manner.
\qed

\section{Further research}\label{sec:fu}
 This paper considers Random Sequential Adsorption (RSA) on the Erd\H{o}s-R\'enyi random graph and relaxes the strict hardcore interaction between active nodes in three different ways, leading to the Threshold model, the Tetris model and the SFAP model. The Threshold model constructs a greedy maximal $K$-independent set. For $K=1$ it is known that the size of the \emph{maximum} set is almost twice as large as the size of a greedy \emph{maximal} set \cite{F90,CE15}. From the combinatorial perspective, it is interesting to study the size of the maximum $K$-independent set in random graphs in order to quantify the gap with the greedy solutions. Similarly, in the context of the SFAP model, it is interesting to find the maximum fraction of vertices that can be activated if there are $K$ different frequencies. Another fruitful direction is to determine the jamming constant for the three generalized RSA models when applied to other classes of random graphs, such as random regular graphs or random graphs with more general degree distributions such as inhomogeneous random graphs, the configuration model or preferential attachment graphs.

{\small
\section*{Acknowledgments}
This research was financially supported by an ERC Starting Grant and by The Netherlands Organization for Scientific Research (NWO) through TOP-GO grant 613.001.012 and Gravitation Networks grant 024.002.003.
We thank Sem Borst, Remco van der Hofstad, Thomas  Meyfroyt, and  Jaron Sanders for a careful reading of the manuscript.

\bibliographystyle{abbrv}

\bibliography{jam}

\begin{thebibliography}{10}

\bibitem{BJLM14}
P.~Bermolen, M.~Jonckheere, F.~Larroca, and P.~Moyal.
\newblock {Estimating the spatial reuse with configuration models}.
\newblock {\em arXiv:1411.0143}, 2014.

\bibitem{BJM13}
P.~Bermolen, M.~Jonckheere, and P.~Moyal.
\newblock {The jamming constant of uniform random graphs}.
\newblock {\em arXiv:1310.8475}, 2013.

\bibitem{BJS15}
P.~Bermolen, M.~Jonckheere, and J.~Sanders.
\newblock {Scaling limits for exploration algorithms}.
\newblock {\em arXiv:1504.02438}, 2015.

\bibitem{BJL15}
G.~Brightwell, S.~Janson, and M.~Luczak.
\newblock {The greedy independent set in a random graph with given degrees}.
\newblock {\em arXiv:1510.05560}, 2015.

\bibitem{CAP07}
A.~Cadilhe, N.~A.~M. Ara{\'{u}}jo, and V.~Privman.
\newblock {Random sequential adsorption: from continuum to lattice and
  pre-patterned substrates}.
\newblock {\em J. Phys. Condens. Matter}, 19(6):065124, 2007.

\bibitem{C14}
M.~Cie{\'{s}}la.
\newblock {Properties of random sequential adsorption of generalized dimers.}
\newblock {\em Phys. Rev. E}, 89(4), 2014.

\bibitem{CE15}
A.~Coja-Oghlan and C.~Efthymiou.
\newblock {On independent sets in random graphs}.
\newblock {\em Random Structures {\&} Algorithms}, 47(3):436--486, 2015.

\bibitem{DFK08}
H.~G. Dehling, S.~R. Fleurke, and C.~K{\"{u}}lske.
\newblock {Parking on a random tree}.
\newblock {\em J. Stat. Phys.}, 133(1):151--157, 2008.

\bibitem{FD07}
S.~Fleurke and H.~Dehling.
\newblock {The sequential frequency assignment process}.
\newblock In {\em 12th WSEAS International Conference on Applied Mathematics},
  pages 280--285, 2007.

\bibitem{FK09}
S.~R. Fleurke and C.~K{\"{u}}lske.
\newblock {A second-row parking paradox}.
\newblock {\em J. Stat. Phys.}, 136(2):285--295, 2009.

\bibitem{FK10}
S.~R. Fleurke and C.~K{\"{u}}lske.
\newblock {Multilayer parking with screening on a random tree}.
\newblock {\em J. Stat. Phys.}, 139(3):417--431, 2010.

\bibitem{F90}
A.~Frieze.
\newblock {On the independence number of random graphs}.
\newblock {\em Discrete Math.}, 81(2):171--175, 1990.

\bibitem{LTDLV15}
N.~I. Lebovka, Y.~Y. Tarasevich, D.~O. Dubinin, V.~V. Laptev, and N.~V.
  Vygornitskii.
\newblock {Jamming and percolation in generalized models of random sequential
  adsorption of linear k-mers on a square lattice.}
\newblock {\em Phys. Rev. E}, 92(6-1):062116, 2015.

\bibitem{ODE_martin}
R.~H. Martin.
\newblock {\em {Nonlinear Operators and Differential Equations in Banach
  Spaces}}.
\newblock Krieger Publishing Co., Inc., Melbourne, FL, 1986.

\bibitem{M84}
C.~McDiarmid.
\newblock {Colouring random graphs}.
\newblock {\em Ann. Oper. Res.}, 1(3):183--200, 1984.

\bibitem{M83}
P.~Meakin.
\newblock {Diffusion-controlled deposition on fibers and surfaces}.
\newblock {\em Phys. Rev. A}, 27(5):2616--2623, 1983.

\bibitem{M14}
T.~M.~M. Meyfroyt.
\newblock {A cooperative sequential adsorption model for wireless gossiping}.
\newblock {\em ACM SIGMETRICS Performance Evaluation Review}, 42(2):40--42,
  2014.

\bibitem{MBBD15}
T.~M.~M. Meyfroyt, S.~C. Borst, O.~J. Boxma, and D.~Denteneer.
\newblock {On the scalability and message count of Trickle-based broadcasting
  schemes}.
\newblock {\em Queueing Syst.}, 81(2):203--230, 2015.

\bibitem{P01}
M.~D. Penrose.
\newblock {Random parking, sequential adsorption, and the jamming limit}.
\newblock {\em Commun. Math. Phys.}, 218(1):153--176, 2001.

\bibitem{PW97}
B.~Pittel and R.~S. Weishaar.
\newblock {On-Line Coloring of Sparse Random Graphs and Random Trees}.
\newblock {\em J. Algorithms}, 23(1):195--205, apr 1997.

\bibitem{RM06}
P.~Ranjith and J.~F. Marko.
\newblock {Filling of the one-dimensional lattice by k-mers proceeds via fast
  power-law-like kinetics.}
\newblock {\em Phys. Rev. E}, 74(4.1):041602, 2006.

\bibitem{R58}
A.~R{\'{e}}nyi.
\newblock {On a one-dimensional problem concerning random space-filling}.
\newblock {\em Publications of the Mathematical Institute of the Hungarian
  Academy of Sciences}, 3:109--127, 1958.

\bibitem{SJK15}
J.~Sanders, M.~Jonckheere, and S.~Kokkelmans.
\newblock {Sub-Poissonian statistics of jamming limits in ultracold Rydberg
  gases.}
\newblock {\em Phys. Rev. Lett.}, 115(4), 2015.

\bibitem{S09}
A.~Sudbury.
\newblock {Random sequential adsorption on random trees}.
\newblock {\em J. Stat. Phys.}, 136(1):51--58, 2009.

\end{thebibliography}

}
\end{document}